\numberwithin{equation}{section}
\theoremstyle{plain}
\newtheorem{theorem}{Theorem}[section]
\newtheorem{lemma}{Lemma}[section]
\newtheorem{corollary}{Corollary}[section]
\newcommand{\sd}{{\sf sd}}
\newcommand{\Var}{{\sf Var}}
\newcommand{\Exp}{{\sf E}}
\newcommand{\Cov}{{\sf Cov}}
\newcommand{\Pro}{{\sf P}}
\newcommand{\D}{\Delta}
\newcommand{\oD}{\overline{\Delta}}
\newcommand{\uD}{\underline{\Delta}}
\newcommand{\oG}{\overline{\Gamma}}
\newcommand{\uG}{\underline{\Gamma}}
\newcommand{\G}{\Gamma}
\newcommand{\Gi}{\Gamma^{k}}
\newcommand{\Di}{\Delta^{k}}
\newcommand{\cD}{\mathcal{D}}
\newcommand{\cN}{\mathcal{N}}
\newcommand{\cL}{\mathcal{L}}
\newcommand{\calo}{\mathcal{O}}
\begin{document}

\begin{frontmatter}
\title{Low-rate renewal theory and estimation} 
\runtitle{Low-rate renewal theory and estimation}

\begin{aug}
\author{\fnms{Georgios} \snm{Fellouris}\ead[label=e1]{fellouri@illinois.edu}}

\runauthor{G. Fellouris}

\affiliation{University of Illinois, Urbana-Champaign}

\address{University of Illinois, Urbana-Champaign \\
Department of Statistics \\
725 South Wright Street, \\
Illini Hall, 116 
Champaign, IL 61820 USA\\
\printead{e1}\\
\phantom{E-mail: fellouri@illinois.edu}} 
\end{aug}

\begin{abstract}
Certain renewal theorems are extended to the case that the rate of the renewal process goes to 0 and, more generally, to the  case that the drift of the random walk goes to infinity.  These extensions are motivated by and applied to the problem of decentralized parameter estimation under severe communication constraints. 
\end{abstract}

\begin{keyword}[class=AMS]
\kwd[Primary ]{62F12} \kwd{60G50}
\kwd[; secondary ]{62F30} \kwd{60G40} \kwd{62N01}
\end{keyword}

\begin{keyword}
\kwd{Anscombe's theorem}
\kwd{Decentralized estimation}
\kwd{Efficient estimation}
\kwd{Low-rate}
\kwd{Random walk}
\kwd{Random sampling}
\kwd{Renewal theory}
\kwd{Overshoot}
\end{keyword}
\end{frontmatter}

\section{Introduction}
\subsection{Low-rate renewal theory} \label{sub11}
Let $(\tau_{n})_{n \in \mathbb{N}}$ be a random walk with finite, positive drift $\delta$. That is, $(\tau_{n}-\tau_{n-1})_{n \in \mathbb{N}}$ are
independent and identically distributed  (i.i.d.) random variables with mean $\delta:=\Exp[\tau] \in (0, \infty)$, where $\tau:=\tau_{1}$ and  $\tau_{0}:=0$. When $\Pro(\tau>0)=1$, $(\tau_{n})$ can be thought of as a sequence of random times at which certain events occur. Then, $N(t):=\max\{n \in \mathbb{N}: \tau_{n} \leq t\}$ counts the number of events that have occurred up to time $t$ and the ``empirical rate'', $N(t)/t$, converges in $\cL^{r}$  to the ``theoretical rate'', $1/\delta$, as $t \rightarrow \infty$, i.e., 
\begin{equation} \label{rate1}
\frac{N(t)}{t} \Big/ \frac{1}{\delta} \overset{\cL^{r}}  \longrightarrow  1 \quad \text{as} \quad t \rightarrow \infty
\end{equation}
for any $r>0$.  
When $\Pro(\tau> 0)< 1$, (\ref{rate1}) is no  longer valid, but it can be recovered if we replace $N(t)$ by the first-passage time
$\nu(t):=\inf\{n \in \mathbb{N}: \tau_{n} > t\}$. Specifically, if $\Exp[(\tau^{-})^{r}]<\infty$ for some $r\geq 1$, then 
\begin{equation} \label{rate2}
\frac{\nu(t)}{t} \Big/ \frac{1}{\delta} \overset{\cL^{r}}  \longrightarrow 1  \quad \text{as} \quad t \rightarrow \infty.
\end{equation}
Here, and in what follows,  $\tau^{+}:=\max\{\tau,0\}$ and $\tau^{-}:=\max\{-\tau,0\}$ represent the positive and negative part of $\tau$, respectively. 

These asymptotic results are well known and it would be fair to say that they constitute the cornerstones of renewal theory;  (\ref{rate1})  goes back to Doob \cite{doob} for $r=1$  and  Hatori \cite{hatori} for  $r>1$;  
 (\ref{rate2}) is due to Chow and Robbins \cite{chowrob} for $r=1$, Chow \cite{chow} for $r=2$ and Gut \cite{gut} for arbitrary $r \geq 1$. For more details on renewal theory we refer to Asmussen \cite{asmu} and Gut \cite{gut}. 
 
Our first contribution in the present work is that we provide sufficient conditions on the moments of  $\tau$ so that these classical, asymptotic results 
remain valid when $\delta$ is not fixed,  but $\delta \rightarrow \infty$ so that $\delta=o(t)$. Specifically, in this asymptotic setup  we show that  (\ref{rate2}) is preserved when   
\begin{equation} \label{condi1}
\Exp[(\tau^{+})^{r+1}]=\calo(\delta^{r+1}) \quad \text{and} \quad \Exp[|\tau-\delta|^{r \vee 2}]=\calo(\delta^{r \vee 2}),
\end{equation} 
where $r \vee 2:=\max\{r, 2\}$. 
When, in particular, $\Pro(\tau>0)=1$, this condition reduces to  $\Exp[\tau^{r+1}]=\calo(\delta^{r+1})$ and it 
guarantees that (\ref{rate1}) is preserved as $\delta \rightarrow \infty$ so that $\delta=o(t)$. 
 
In addition to these extensions, we also establish a version of Anscombe's theorem \cite{ans} in the same, ``low-rate'' spirit. 
Thus, let $(Y_{t})_{t \in \mathbb{N}}$ be an arbitrary  sequence of random variables  that converges in distribution as $t \rightarrow \infty$ to some random variable $Y_{\infty}$, i.e., 
$Y_{t} \overset{D} \longrightarrow   Y_{\infty}$ as $t \rightarrow \infty$. Suppose that $(Y_{t})_{t \in \mathbb{N}}$ is observed only at a sequence, $(\tau_{n})_{n \in \mathbb{N}}$, of random times  that form a renewal process with rate  $1/\delta$. If $Y$ is uniformly continuous in probability,  
from classical renewal theory and  Anscombe's theorem \cite{ans} it follows that convergence is preserved when $t$ is replaced by the last sampling time, $\tau(t):=\tau_{N(t)}$, i.e., 
\begin{equation}  \label{law} 
 Y_{\tau(t)} \overset{D} \longrightarrow   Y_{\infty}  \quad \text{as} \quad t \rightarrow \infty
 \end{equation}
for any fixed $\delta$. Is this also the case when $\delta \rightarrow \infty$ so that $\delta=o(t)$? We give a positive answer to this question under the assumption that $\Var[\tau]=\calo(\delta^{2})$. 

The proofs of the above results are based on properties of stopped random walks, for which we refer to Gut \cite{gut}, and  Lorden's \cite{lorden} inequalities on the excess over a boundary and the age of a renewal process. Moreover, they provide alternative proofs of the classical, fixed-rate results under additional moment assumptions on $\tau$.

\subsection{Motivation} 
When $\Pro(\tau>0)<1$, the asymptotic setup ``$\delta,t \rightarrow \infty$ so that $\delta=o(t)$'' implies that the random walk $(\tau_{n})$ has ``large'' drift $\delta$ and 
crosses a positive level $t$ that is much larger than $\delta$. When $\Pro(\tau>0)=1$, it implies that the renewal process $(\tau_{n})$ is observed in an interval $[0,t]$ and the average time, $\delta$,  between two consecutive events  is ``large'', yet much smaller than the length of observation window, $t$.  

We are not aware of similar low-rate/large-drift  extensions of (\ref{rate1})-(\ref{rate2}) and (\ref{law}). On the contrary, extensions and ramifications of the above results in the literature often require random walks with \textit{small drifts} and renewal processes with \textit{high rates}. For example, when $(\tau_{n})$ is a random walk and the distribution of $\tau$ can be embedded in an exponential family, Siegmund \cite{sieg2} computed the limiting values of the moments of the overshoot $\tau_{\nu(t)}-t$ as $\delta \rightarrow 0$ and $t \rightarrow \infty$ so that $\delta t=\calo(1)$. For similar ``corrected diffusion approximations'' that require random walks with small drifts we refer to  Chang\cite{cha}, Chang and Peres \cite{peres},  Blanchet and Glynn  \cite{bla}. 

When $(\tau_n)$ is a renewal process that corresponds to the sampling times of some stochastic process, as for example in Mykland and A\"it-Sahalia \cite{myk} or  Rosenbaum and Tankov \cite{rose}, limit theorems are also obtained as $\delta \rightarrow 0$, that is as the sampling rate goes to infinity. 
While such a \textit{high-frequency}  setup is natural in certain applications, such as mathematical finance (see, e.g., Zhang et. al \cite{zhang}, Florescu et. al \cite{flo}), and commonplace in the statistical inference of stochastic processes (see, e.g., A\"it-Sahalia  and Jacod \cite{aij2}, 
Tudor and Viens \cite{viens}), in other application areas it is often desirable to have \textit{low-frequency} sampling.

This is, for example, the case when the process of interest is being monitored at some remote location by a network of (low-cost, battery-operated) sensors,  which transmit their data to a fusion center subject to bandwidth and energy constraints \cite{for}. This is often the situation in environmental monitoring, intelligent transportation, space exploration. In such applications, infrequent communication  leads to significant energy gains, since it implies that for long time-periods the sensors can only sense the environment, without transmitting data  to the fusion center \cite{park}. On the other hand, limited bandwidth requires that each sensor transmit a \textit{low-bit} message  whenever it communicates with the fusion center \cite{tsi}.
Here, we consider a parameter estimation problem under such communication constraints, with the goal of demonstrating the usefulness of the aforementioned extensions of renewal theory.

\subsection{Decentralized parameter estimation} \label{sub13}
Consider $K$ dispersed sensors that take repeated measurements of  some quantity $\mu$ subject to measurement error. Thus, we assume that 
each sensor $k$ takes a sequence of i.i.d. observations $(X_{t}^{k})_{t \in \mathbb{N}}$ with  expectation $\Exp[X_{1}^{k}]=\mu$,
standard deviation $\sd[X^{k}_{1}]= \sigma_{k}$ and that observations are independent across sensors. 
The sensors transmit their data to a fusion center, which is responsible for estimating $\mu$. However, due to bandwidth constraints, they are allowed to 
transmit only low-bit messages.  The problem is to decide $(i)$ what messages each sensor transmits to the fusion center and  $(ii)$ how the fusion center 
utilizes these messages in order to estimate $\mu$.

This problem has been considered extensively in the engineering literature under the assumption that $\sigma_{1}, \ldots, \sigma_{K}$ are known and often assuming that $X_{1}^{k}, 1 \leq k \leq K$ are identically distributed  and/or that their densities have bounded support (see, e.g., Ribeiro and Giannakis \cite{ribe2},
Xiao and Luo \cite{luo3}, Msechu and Giannakis \cite{gia} and the references therein). However, the proposed estimators in all these papers do \textit{not} attain the asymptotic distribution of the optimal (in a mean-square sense) linear estimator that would be employed by the fusion center in the case that it had full access to the sensor observations. Indeed, if each sensor $k$ transmitted its exact observation $X_{t}^{k}$ at any time $t$, the fusion center could estimate $\mu$ at time $t$ with 
\begin{equation} \label{opti}
\mu_{t}:=\sum_{k=1}^{K} w_{k} \mu_{t}^{k}, \quad \mu_{t}^{k}:= S_{t}^{k}/t, \quad S_{t}^{k}:=X_{1}^{k}+\ldots+X_{t}^{k},
\end{equation}
where $w_{k}$ is proportional to $(\sigma_{k})^{-2}$ and $\sum_{k=1}^{K} w_{k}=1$, and from the Central Limit Theorem we would  have 
\begin{equation} \label{CLT}
\frac{\mu_{t} -\mu}{\sqrt{K} \sigma /\sqrt{t}} \rightarrow \cN(0,1) \quad \text{as} \quad t \rightarrow \infty,
\end{equation}
where $\sigma^{2}:= \sum_{k=1}^{K} \sigma_{k}^{2} /K$. It has recently been shown that it is possible to obtain an estimator that
attains this asymptotic distribution, even though it requires minimal transmission activity.  Indeed, suppose that each sensor $k$ communicates with the fusion center whenever $(S_{t}^{k})_{t \in \mathbb{N}}$ has changed by  $\Di>0$ since the previous communication time. Then, the sequence of communication times of sensor $k$ is $\tau_{n}^{k}:= \inf  \{t \geq \tau_{n-1}^{k}: |S^{k}_{t}-S^{k}_{\tau_{n-1}}| \geq \Di\}$ and $N^{k}(t):=\max\{n \in \mathbb{N}: \tau_{n}^{k} \leq t\}$ is the number of its transmitted messages up to time $t$. At time $\tau_{n}^{k}$, sensor $k$ transmits (with only 1 bit) the value of the indicator
$z_{n}^{k}:=1(S^{k}_{\tau_{n}^{k}}-S^{k}_{\tau_{n-1}} \geq \Di)$, informing the fusion center whether the upper or the lower threshold was crossed, but not about the size of the overshoot, $\eta_{n}^{k}:=|S^{k}_{\tau_{n}^{k}}-S^{k}_{\tau_{n-1}}|-\Di$.  Based on this information, the fusion center can estimate $\mu$ at any time $t$ with 
\begin{equation} \label{old}
\check{\mu}_{t}:=\sum_{k=1}^{K} w_{k} \check{\mu}_{t}^{k}, \quad  \check{\mu}_{t}^{k}:= \frac{\Di}{t} \, \sum_{n=1}^{N_{t}^{k}} (2 z_{n}^{k}-1).
\end{equation}
It can be shown, as in Fellouris \cite{felaos} (Section 4.2), that $\check{\mu}_{t}$ preserves (\ref{CLT}) when each $(X_{t}^{k})_{t \in  \mathbb{N}}$ results from \textit{high-frequency} sampling of an underlying Brownian motion. More generally, if each sensor $k$ can transmit $d_{k}\geq1$ bits whenever it communicates with the fusion center, then at each time $\tau_{n}^{k}$ it can transmit information not only about the sign, but also about the size of the overshoot, $\eta_{n}^{k}$. In this case, the resulting estimator will preserve (\ref{CLT}) as long as the number of bits per transmission, $d_{k}$, goes to infinity as $t \rightarrow \infty$ for every  $1 \leq k  \leq K$ (see   Yilmaz and Wang \cite{yil} for the case of Gaussian observations).

The above results, however, do not answer the question whether it is possible to construct an estimator at the fusion center that requires transmission of  \textit{1-bit} messages from the sensors and, nevertheless, achieves the asymptotic distribution (\ref{CLT}) for \textit{any}  distribution the sensor observations, $X_{1}^{k}$, $1 \leq k \leq K$, may follow.  In this article, we show that this is indeed possible when sensor $k$ transmits the 1-bit message $z_{n}^{k}$ at each time $\tau_{n}^{k}$, as long as the fusion center estimator is modified as follows:
\begin{equation} \label{new}
\hat{\mu}_{t}:=\sum_{k=1}^{K} w_{k} \hat{\mu}_{t}^{k}, \quad  \hat{\mu}_{t}^{k}:= \frac{\Di}{\tau^{k}(t)} \, \sum_{n=1}^{N_{t}^{k}} (2 z_{n}^{k}-1)
\end{equation}
where $\tau^{k}(t):=\tau^{k}_{N^{k}(t)}$ is the last communication time from sensor $k$. More specifically, we show that (\ref{CLT}) remains valid 
when $\mu_{t}$  is replaced by $\hat{\mu}_{t}$, as long as $\Di \rightarrow \infty$ so that $\sqrt{t} << \Di <<t$ for every $1 \leq k \leq K$. In other words, the communication rate from the sensors to the fusion center  \textit{ must be sufficiently low} for $\hat{\mu}_{t}$ to be efficient, not only from a practical, but also from a statistical point of view. 

Note also that $\hat{\mu}_{t}$ does not require knowledge of the  distribution of $X_{1}^{k}$, $1 \leq k \leq K$. When, however, these distributions are known and satisfy certain integrability conditions, we can construct a modification of $\hat{\mu}_{t}$ that preserves asymptotic distribution (\ref{CLT}) when $\sqrt[4]{t} << \Di <<t$ for every $1 \leq k  \leq K$ (that is, for even higher rates of communication), whereas we can also construct a modification of $\check{\mu}_{t}$ that, contrary to $\check{\mu}_{t}$, does  attain (\ref{CLT}), as long as $\sqrt[4]{t} << \Di <<\sqrt{t}$ for every $1 \leq k \leq K$.  Finally,  we consider the estimation of $\sigma$, when $\sigma_{k}$, $1 \leq k \leq K$ are equal to one another, but unknown.  We show that it is also possible to estimate $\sigma$ at the fusion center with infrequent transmission of 1-bit messages from the sensors, which  allows one to obtain asymptotic confidence intervals for the estimators of $\mu$ that satisfy (\ref{CLT}). 

The proofs of these properties rely on classical, renewal-theoretic results, such as the asymptotic behavior of the overshoots, as well as on the
low-rate extensions of (\ref{rate1}), (\ref{rate2}) and (\ref{law}), which we described in Subsection \ref{sub11}.

\subsection{Summary}
The rest of the paper is organized as follows:  in Section \ref{sec2}, we obtain general, low-rate extensions of classical, renewal-theoretic results.
 More specifically, in Subsection \ref{sub21} we establish (\ref{rate2}) 
 for an arbitrary random walk whose drift $\delta$ goes to infinity so that  $\delta=o(t)$.  In Subsection \ref{sub22} we establish (\ref{rate1}) 
 for  an arbitrary renewal process whose rate $1/\delta$ goes to 0 so that $\delta=o(t)$.  Moreover, we illustrate these two theorems in the case that $\tau$ is stochastically less variable than an exponential random variable and in the case that $\tau$ is the first hitting time of a spectrally negative L\'evy process.  In Subsection \ref{sub23}, we establish a low-rate version of 
Anscombe's theorem.

In Section \ref{sec3}, we consider the problem of estimating the drift $\mu$ of a random walk, $(S_{t})_{t \in \mathbb{N}}$, where 
$\Exp[S_{t}]=\mu t$ and $\Var[S_{t}]=\sigma^{2} t$. In Subsection \ref{sub31} we assume that $(S_{t})_{t \in \mathbb{N}}$  is observed at an arbitrary renewal process, $(\tau_{n})_{n \in \mathbb{N}}$. We show that the asymptotic distribution of the sample mean, $S_{t}/t$, is preserved when we replace the current time $t$ with the last sampling time, $\tau(t)$, \textit{even if $\delta \rightarrow \infty$ so that $\delta=o(t)$}, as long as
$\Var[\tau]=\calo(\delta^{2})$.  In Subsection \ref{sub32}, 
we focus on the case that $(S_{t})$ is being sampled whenever it changes  by $\D>0$ since the previous sampling instance,  but the size of the overshoot is not known. 
We show that, even in this setup, the asymptotic distribution of $S_{t}/t$ can be preserved, 
as long as the sampling rate is sufficiently low, in the sense that $\D \rightarrow \infty$ so that $\sqrt{t} <<\D<<t$. 
In Subsection \ref{sub33} we construct improved estimators using classical results from renewal theory that allow us to approximate the unobserved overshoots, as long as we know the distribution of the random walk and it satisfies certain assumptions. In Subsection \ref{sub34}, we compare numerically the above
estimators and illustrate the advantages of low-rate sampling.  In Subsection \ref{sub35} we consider the estimation of $\sigma$.

Section \ref{sec3} focuses on the one-dimensional $(K=1)$ version of the estimation problem that we described in Subsection \ref{sub13}. We do so mainly in order to lighten the notation. In Section \ref{sec4}, we formulate these results in a multi-dimensional setup. We conclude  in Section \ref{sec5}. 

\section{Low-rate renewal theory} \label{sec2}
\subsection{The case of random walks} \label{sub21}
Let $(\tau_{n})_{n \in \mathbb{N}}$ be a random walk with positive drift $\delta$, i.e., $(\tau_{n}-\tau_{n-1})_{n \in \mathbb{N}}$ is  a sequence of i.i.d. random variables, not necessarily positive, with mean $\delta:=\Exp[\tau]\in (0,\infty)$ and finite variance $\Var[\tau]<\infty$, 
where $\tau:=\tau_{1}$, $\tau_{0}:=0$. For any $t>0$, the first-passage time $\nu(t) :=\inf\{n \in \mathbb{N}: \tau_{n} > t\}$  is a stopping time with respect to the filtration generated by $(\tau_{n})_{n \in \mathbb{N}}$, therefore, 
$$ 
\tau_{\nu(t) } -\delta \,  \nu(t)  = \sum_{n=1}^{\nu(t)} \Bigl(\tau_{n}-\tau_{n-1}-\delta \Bigr)
$$ 
is a \textit{stopped} random walk with drift zero and from Wald's identities we  have 
\begin{align} \label{wald} 
&\Exp[\tau_{\nu(t) } -\delta \,  \nu(t)] =0 , \quad \Var[\tau_{\nu(t) } -\delta \, \nu(t) ] = \Exp[\nu(t)] \;  \Var[\tau].
\end{align}
Moreover, from Lorden's \cite{lorden}  bound on the excess over a boundary we have
\begin{equation} \label{newlor1}
\sup_{t \geq 0}  \Exp[\tau_{\nu(t) }-t]  \leq  \frac{\Exp[(\tau^{+})^{2}]}{\delta}.
\end{equation}
Using (\ref{wald})-(\ref{newlor1}), we can establish a low-rate extension of (\ref{rate2}) when $r=1$. 

\begin{theorem} \label{theo:0}
If  $\Var[\tau]=\calo(\delta^{2})$ as $\delta \rightarrow \infty$,  
then
\begin{equation*}
\frac{\delta \nu(t) }{t} \overset{\cL^{1}}  \longrightarrow 1  \quad \text{as} \quad \delta,t \rightarrow \infty \quad \text{so that} \quad \delta=o(t).
\end{equation*}
When, in particular,  $\Var[\tau]=\calo(\delta^{q})$ as $\delta \rightarrow \infty$ for some $q \in [0,2]$,
\begin{equation} \label{details0}
\Exp\Bigl[\Big|\frac{\delta \nu(t) }{t}-1\Big|\Bigr] = \calo \Bigl( \frac{\delta}{t} \Bigr)
+ \calo\Bigl( \sqrt{\frac{\delta^{q-1}}{t}}\Bigr).
\end{equation}
\end{theorem}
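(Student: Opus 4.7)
The plan is to decompose
\begin{equation*}
\frac{\delta\,\nu(t)}{t}-1 \;=\; \frac{\delta\,\nu(t)-\tau_{\nu(t)}}{t} \;+\; \frac{\tau_{\nu(t)}-t}{t}
\end{equation*}
and to bound each summand in $\cL^{1}$ separately, using Lorden's overshoot inequality (\ref{newlor1}) for the second term and Wald's identities (\ref{wald}) for the first.  Before doing so, I would restate the hypothesis in a convenient form: since $\Exp[\tau]=\delta$ and $q\le 2$, one has $\Exp[\tau^{2}]=\Var[\tau]+\delta^{2}=\calo(\delta^{2})$, and hence $\Exp[(\tau^{+})^{2}]=\calo(\delta^{2})$.

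For the overshoot term, Lorden's bound (\ref{newlor1}) directly gives $\Exp[\tau_{\nu(t)}-t]\le \Exp[(\tau^{+})^{2}]/\delta = \calo(\delta)$; nonnegativity of the overshoot then yields $\Exp[(\tau_{\nu(t)}-t)/t]=\calo(\delta/t)$, which is the first summand in (\ref{details0}).

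For the stopped-walk term, the first identity in (\ref{wald}) gives mean zero, so Cauchy--Schwarz together with the second identity produces $\Exp|\tau_{\nu(t)}-\delta\,\nu(t)|\le (\Exp[\nu(t)]\,\Var[\tau])^{1/2}$.  To control $\Exp[\nu(t)]$, I would invoke the first Wald identity once more, combining it with the overshoot bound just derived: $\delta\,\Exp[\nu(t)]=\Exp[\tau_{\nu(t)}]=t+\calo(\delta)$, so in the regime $\delta=o(t)$ we get $\Exp[\nu(t)]=\calo(t/\delta)$.  Substituting $\Var[\tau]=\calo(\delta^{q})$ then yields
\begin{equation*}
\Exp\Bigl[\frac{|\tau_{\nu(t)}-\delta\,\nu(t)|}{t}\Bigr] \;\le\; \frac{\sqrt{\calo(t/\delta)\cdot\calo(\delta^{q})}}{t}\;=\;\calo\Bigl(\sqrt{\tfrac{\delta^{q-1}}{t}}\Bigr),
\end{equation*}
which matches the second summand in (\ref{details0}).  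Specializing to $q=2$ gives the bound $\calo(\delta/t)+\calo(\sqrt{\delta/t})$, which vanishes under $\delta=o(t)$, delivering the asserted $\cL^{1}$ convergence.

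The argument is essentially a short bootstrap: Lorden bounds the overshoot, this feeds through Wald's first identity into a bound on $\Exp[\nu(t)]$, and the latter is inserted into Cauchy--Schwarz with Wald's second identity.  The only point requiring minor care is the legitimacy of applying Wald's second identity, which needs $\Exp[\nu(t)]<\infty$; this holds by a standard argument from $\delta>0$, so no real obstacle arises.
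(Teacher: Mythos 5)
Your proposal is correct and follows essentially the same route as the paper: the same triangle-inequality decomposition into an overshoot term and a stopped-walk term, Lorden's inequality (\ref{newlor1}) for the former, and Cauchy--Schwarz with Wald's identities (\ref{wald}) for the latter. The only cosmetic difference is that you isolate the bound $\Exp[\nu(t)]=\calo(t/\delta)$ as an explicit intermediate step, whereas the paper rewrites $\Exp[\nu(t)]\Var[\tau]/t^{2}$ as $(\Exp[\tau_{\nu(t)}]/t)\cdot(\Var[\tau]/(\delta t))$ before taking the square root; both hinge on the same use of Wald's first identity together with the Lorden bound.
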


\begin{proof} It clearly suffices to prove (\ref{details0}). Let us first observe that
\begin{align}\label{triarw}
\Big|\frac{\delta \nu(t) }{t}-1\Big| &\leq \frac{|\delta \nu(t) -\tau_{\nu(t) }|}{t} + \frac{\tau_{\nu(t) }-t}{t}.
\end{align}
From  (\ref{newlor1}) and the assumption  that $\Exp[(\tau^{+})^{2}]=\calo(\delta^{2})$ we have
\begin{align} \label{show10}
\begin{split}
\frac{\Exp[\tau_{\nu(t) }-t]}{t} \leq \frac{\Exp[(\tau^{+})^{2}]}{\delta \, t} = \frac{  \calo(\delta^{2})}{\delta \, t} =
\calo \Bigl( \frac{\delta}{t} \Bigr).
\end{split}
\end{align}
Moreover, starting with an application of H\"older's inequality, we have
\begin{align} \label{show20}
\frac{\Exp[|\delta \nu(t) -\tau_{\nu(t) }|]}{t} &\leq \sqrt{ \frac{\Exp[(\delta \nu(t) -\tau_{\nu(t) })^{2}]}{t^{2}}} 
= \sqrt{ \frac{\Exp [\nu(t) ] \Var[\tau]}{t^{2}}} = \sqrt{ \frac{\Exp [\tau_{\nu(t) }]}{t} \, \frac{\Var[\tau]}{\delta \, t}} \nonumber \\
&= \sqrt{ \Bigl(1+ \calo \Bigl( \frac{\delta}{t} \Bigr) \Bigr) \, \frac{\calo(\delta^{q})}{\delta t}} 
=\sqrt{\calo \Bigl( \frac{\delta^{q-1}}{t} \Bigr)}.
\end{align} 
The first two equalities follow from Wald's identities (\ref{wald}), whereas the third equality follows from (\ref{show10}) 
and the assumption that $\Var[\tau]=\calo(\delta^{q})$. Now, taking expectations in (\ref{triarw}) and applying (\ref{show10})-(\ref{show20})  
completes the proof. 
\end{proof}

\noindent \underline{\textbf{Remarks:}} (1) The proof of Theorem \ref{theo:0} provides an alternative way to prove (\ref{rate2}) (for $r=1$) when $\delta$ is fixed, under the additional assumption that $\Var[\tau]<\infty$.  

(2) The speed of convergence in  $\delta \nu(t) /t \overset{\cL^{1}} \rightarrow 1$ is determined by $q$, which describes the growth of $\Var[\tau]$ 
as $\delta \rightarrow \infty$. More specifically, the right-hand side in (\ref{details0}) is  $\calo(\sqrt{\delta/t})$ when $q=2$ 
 and $\calo(\delta/t)$ when $q=1$. 

In order to extend (\ref{rate2}) for arbitrary $r >1$, in addition to (\ref{wald})-(\ref{newlor1}) we need the
stopped versions of the Marcinkiewicz-Zygmund inequalities (see \cite{gut},  p.  22), according to which 
\begin{equation} \label{waldp}
\Exp[(\tau_{\nu(t) } - \delta \nu(t) )^{r}] \leq c_{r} \,  \Exp[\nu(t) ^{r/2}] \,  \Exp[|\tau-\delta|^{r}],
\end{equation}
for any $r \geq 2$, as well as the following generalization of (\ref{newlor1}), 
\begin{equation} \label{newlor}
\sup_{t \geq 0} \Exp[(\tau_{\nu(t) }-t)^{r}]  \leq  c_{r}  \, \frac{\Exp[(\tau^{+})^{r+1}]}{\delta},
\end{equation}
which is also due to Lorden \cite{lorden} and is valid for any $r\geq 1$. Here, and in what follows, $c_{r}$ represents a generic, positive constant that depends on $r$. Finally, we will need the following algebraic inequality:
\begin{equation} \label{alge}
|x+y|^{r} \leq 2^{r-1} \, (x^{r}+y^{r}), \quad x, y>0.
\end{equation}

\begin{theorem} \label{theo:1}
Let $r \geq 1$ and suppose that as $\delta \rightarrow \infty$
\begin{enumerate}
\item[(*)] $\quad \Exp[|\tau-\delta|^{r \vee 2}] =\calo(\delta^{r \vee 2})   \quad \text{and} \quad  \Exp[(\tau^{+})^{r+1}]=\calo(\delta^{r+1})$.
\end{enumerate}
Then $$
\frac{\delta \nu(t) }{t} \overset{\cL^{r}}  \longrightarrow 1 \quad \text{as} \quad \delta,t \rightarrow \infty \quad \text{so that} \quad \delta=o(t).
$$
If, in particular, $\Exp[|\tau-\delta|^{r \vee 2}] =\Theta(\delta^{q_{r \vee 2}})$, where  $q_{r \vee 2} \in [0, r \vee 2]$, then
\begin{equation} \label{details}
\Exp\Bigl[\Big|\frac{\delta \nu(t) }{t}-1\Big|^{r} \Bigr] = \calo \Bigl( \frac{\delta}{t} \Bigr)^{r}
+ \calo\Bigl(\frac{\delta^{\alpha_{r}}}{t}\Bigr)^{\frac{r}{2}} ,
\end{equation}
where $\alpha_{r}:= 2 (q_{r \vee 2} / r \vee 2)-1$.
\end{theorem}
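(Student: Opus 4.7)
The plan is to follow the blueprint of Theorem~\ref{theo:0}, replacing the triangle inequality, (\ref{newlor1}), and Wald's second identity by their higher-moment analogues: the algebraic inequality (\ref{alge}), the $\cL^{r}$-Lorden bound (\ref{newlor}), and the Marcinkiewicz--Zygmund inequality (\ref{waldp}). First I apply (\ref{alge}) with $x=|\delta\nu(t)-\tau_{\nu(t)}|/t$ and $y=(\tau_{\nu(t)}-t)/t\geq 0$, so that it suffices to bound $\Exp[(\tau_{\nu(t)}-t)^{r}]/t^{r}$ and $\Exp[|\delta\nu(t)-\tau_{\nu(t)}|^{r}]/t^{r}$ separately.

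The overshoot term is immediate from (\ref{newlor}) and the hypothesis $\Exp[(\tau^{+})^{r+1}]=\calo(\delta^{r+1})$, yielding $\calo(\delta/t)^{r}$, which is the first summand in (\ref{details}). As a byproduct, Lyapunov applied to the same hypothesis gives $\Exp[(\tau^{+})^{2}]=\calo(\delta^{2})$, and hence (\ref{newlor1}) and (\ref{wald}) yield the uniform first-moment estimate $\Exp[\nu(t)]=\Exp[\tau_{\nu(t)}]/\delta=\calo(t/\delta)$, which I will reuse below.

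For the centered term I split on $r$. When $1\leq r\leq 2$, concavity of $x\mapsto x^{r/2}$ and (\ref{wald}) give
\[
\Exp[|\delta\nu(t)-\tau_{\nu(t)}|^{r}]\leq (\Exp[(\delta\nu(t)-\tau_{\nu(t)})^{2}])^{r/2}=(\Exp[\nu(t)]\,\Var[\tau])^{r/2}=\calo(\delta^{q_{2}-1}t)^{r/2},
\]
which after division by $t^{r}$ matches $\calo(\delta^{\alpha_{r}}/t)^{r/2}$ since $\alpha_{r}=q_{2}-1$ in this range. When $r>2$, (\ref{waldp}) together with $\Exp[|\tau-\delta|^{r}]=\calo(\delta^{q_{r}})$ gives
\[
\Exp[(\delta\nu(t)-\tau_{\nu(t)})^{r}]\leq c_{r}\,\calo(\delta^{q_{r}})\,\Exp[\nu(t)^{r/2}],
\]
so the claimed bound $\calo(\delta^{\alpha_{r}}/t)^{r/2}$ will follow once the uniform estimate $\Exp[\nu(t)^{r/2}]=\calo((t/\delta)^{r/2})$ is established.

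The hard step is precisely this last estimate, since Marcinkiewicz--Zygmund feeds a lower moment of $\nu(t)$ back into itself; a one-line computation as in Theorem~\ref{theo:0} is no longer available. My plan is a self-bootstrapping argument: splitting $\nu(t)^{r}\leq 2^{r-1}(\tau_{\nu(t)}/\delta)^{r}+2^{r-1}(|\tau_{\nu(t)}-\delta\nu(t)|/\delta)^{r}$ via (\ref{alge}), the first summand has expectation $\calo((t/\delta)^{r})$ by writing $\tau_{\nu(t)}=t+(\tau_{\nu(t)}-t)$ and invoking (\ref{newlor}) plus Lyapunov, while the second has expectation bounded via (\ref{waldp}) at exponent $r$ by $c_{r}\calo(1)\,\Exp[\nu(t)^{r/2}]\leq c_{r}\calo(1)\,\Exp[\nu(t)^{r}]^{1/2}$ (Lyapunov again). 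Writing $A=\Exp[\nu(t)^{r}]$ and $B=(t/\delta)^{r}\to\infty$ produces an inequality of the form $A\leq C_{1}B+C_{2}\sqrt{A}$, which the quadratic formula resolves to $A=\calo(B)$; Lyapunov then yields $\Exp[\nu(t)^{r/2}]\leq A^{1/2}=\calo((t/\delta)^{r/2})$, closing the argument. Finally, $\cL^{r}$-convergence is automatic because $q_{r\vee 2}\leq r\vee 2$ forces $\alpha_{r}\leq 1$, so both error terms in (\ref{details}) are $o(1)$ under $\delta=o(t)$.
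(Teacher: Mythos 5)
Your argument is correct and arrives at (\ref{details}), but the way you close the loop on $\Exp[\nu(t)^{r/2}]$ is genuinely different from the paper's. The paper proves the needed boundedness of $\Exp[(\delta\nu(t)/t)^{(r\vee 2)/2}]$ by a dyadic induction: it shows $\Exp[(\delta\nu(t)/t)^{(r\vee2)/2}]=1+o(1)$, anchoring at $r\in[1,2]$ via Theorem~\ref{theo:0}, then passing from $r/2\in(1,2)$ to $r\in(2,4]$ using Lemma~\ref{lemC} and the already-proved version of (\ref{details}) at the halved exponent, and so on up the dyadic chain. You instead set up a self-referential inequality $A\leq C_{1}B+C_{2}\sqrt{A}$ with $A=\Exp[\nu(t)^{r}]$ and $B=(t/\delta)^{r}$ — Marcinkiewicz--Zygmund at exponent $r$ feeds $\Exp[\nu(t)^{r/2}]$ back in, and Lyapunov converts that into $\sqrt{A}$ — and solve the quadratic to get $A=\calo(B)$ directly. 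Your route is cleaner: it avoids the induction entirely, needs only the crude uniform bound $\Exp[\nu(t)^{r/2}]=\calo((t/\delta)^{r/2})$ rather than the sharper statement $\Exp[(\delta\nu(t)/t)^{(r\vee2)/2}]\to 1$, and sidesteps a slight informality in the paper's step where $\cL^{p}$-convergence of $\delta\nu(t)/t$ is used to deduce convergence of its $p$-th moment for $p>1$ (this requires a uniform-integrability argument the paper leaves implicit). One thing you should say explicitly: the quadratic bootstrap is vacuous unless $\Exp[\nu(t)^{r}]<\infty$ a priori; this does hold, because $\Exp[|\tau-\delta|^{r\vee 2}]<\infty$ forces $\Exp[(\tau^{-})^{r\vee 2}]<\infty$, which by standard moment results for first-passage times of random walks with positive drift (e.g., \cite{gut}) gives $\Exp[\nu(t)^{r\vee 2}]<\infty$ — but this step should be made visible, since without it the inequality $A\leq C_{1}B+C_{2}\sqrt{A}$ carries no information.
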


It is clear that Theorem \ref{theo:1} reduces to Theorem \ref{theo:0} when $r=1$. Before we prove it, we need to state the following lemma, which 
implies that  if condition $(*)$ holds for some $r > 1$, then it also holds for any $s \in [1,r]$.

\begin{lemma} \label{lemC}
\begin{enumerate}
\item[(i)] If  $r \geq 1$, then
$$\Exp[(\tau^{+})^{r+1}]= \calo(\delta^{r+1}) \Rightarrow \Exp[(\tau^{+})^{s}] =\calo(\delta^{s}) , \quad \forall s \in [1, r+1]. $$
\item[(ii)] If $r \geq 2$, then 
$$\Exp[|\tau-\delta|^{r}] =\calo(\delta^{r}) \Rightarrow \Exp[|\tau-\delta|^{s}] =\calo(\delta^{s}),  \quad \forall s \in [2, r]. $$
\end{enumerate}
\end{lemma}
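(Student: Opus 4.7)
The plan is to derive both parts as immediate consequences of Lyapunov's inequality, which states that for any nonnegative random variable $X$ and any $0<s\leq p$ one has $(\Exp[X^{s}])^{1/s} \leq (\Exp[X^{p}])^{1/p}$, or equivalently $\Exp[X^{s}] \leq (\Exp[X^{p}])^{s/p}$. This is just Jensen's inequality applied to the convex function $x \mapsto x^{p/s}$ on $[0,\infty)$, so no special structure of $\tau$ is needed.

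For part (i), I apply this with $X=\tau^{+}$ and $p=r+1$: for every $s \in [1,r+1]$,
\begin{equation*}
\Exp[(\tau^{+})^{s}] \leq \bigl(\Exp[(\tau^{+})^{r+1}]\bigr)^{s/(r+1)} = \bigl(\calo(\delta^{r+1})\bigr)^{s/(r+1)} = \calo(\delta^{s}),
\end{equation*}
where the second equality uses the hypothesis. For part (ii), I apply it with $X=|\tau-\delta|$ and $p=r$: for every $s \in [2,r]$,
\begin{equation*}
\Exp[|\tau-\delta|^{s}] \leq \bigl(\Exp[|\tau-\delta|^{r}]\bigr)^{s/r} = \bigl(\calo(\delta^{r})\bigr)^{s/r} = \calo(\delta^{s}).
\end{equation*}

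There is essentially no obstacle; the only thing to be careful about is that the exponent $s/p$ appearing in Lyapunov's bound exactly matches the exponent of $\delta$ needed in the conclusion, which is built into the way the hypotheses in $(*)$ are stated (each moment is bounded by the corresponding power of $\delta$). Consequently the asymptotic control propagates down without loss, and no additional moment estimates or properties of the random walk are required.
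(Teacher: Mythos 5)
Your proof is correct and is essentially the same as the paper's: both derive the result from the same power-mean bound $\Exp[X^{s}] \le (\Exp[X^{p}])^{s/p}$ applied to $X=\tau^{+}$ and $X=|\tau-\delta|$, the paper simply attributing it to H\"older's inequality while you invoke Lyapunov/Jensen. The calculations are identical.
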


\begin{proof}
If  $\Exp[(\tau^{+})^{r+1}]= \calo(\delta^{r+1})$ for some $r \geq 1$, then from  H\"older's inequality it follows that   for any $s \in [1,r+1]$
$$\Exp[(\tau^{+})^{s}]  \leq  (\Exp[(\tau^{+})^{r+1}])^{s/(r+1)}= (\calo(\delta^{r+1}))^{s/(r+1)} = \calo(\delta^{s}).$$
Similarly, if $\Exp[|\tau-\delta|^{r}] =\calo(\delta^{r})$ for some $r \geq 2$, then 
$$\Exp[|\tau-\delta|^{s}] \leq  (\Exp[|\tau-\delta|^{r}])^{s/r}= (\calo(\delta^{r}))^{s/r}= \calo(\delta^{s}).$$
\end{proof}

\begin{proof}[Proof of Theorem \ref{theo:1}] 
Let $r \ge 1$. By definition, $\alpha_{r} \leq 1$, thus, it suffices to prove (\ref{details}). Applying the algebraic inequality (\ref{alge})
to (\ref{triarw}) and  taking expectations we have
\begin{align} \label{show0}
\Exp \Bigl[\Big|\frac{\delta \nu(t) }{t}-1\Big|^{r} \Bigr] &\leq c_{r} \Bigl( \frac{\Exp[|\delta \nu(t) -\tau_{\nu(t) }|^{r}]}{t^{r}} + \frac{\Exp[(\tau_{\nu(t) }-t)^{r}]}{t^{r}} \Bigr).
\end{align}
From (\ref{newlor}) and the assumption  $\Exp[(\tau^{+})^{r+1}]=\calo(\delta^{r+1})$ it follows that
\begin{align} \label{show1}
\begin{split}
\frac{\Exp[(\tau_{\nu(t) }-t)^{r}]}{t^{r}} \leq c_{r} \, \frac{\Exp[(\tau^{+})^{r+1}]}{\delta \, t^{r}} = \frac{  \calo(\delta^{r+1})}{\delta \, t^{r}} =
\calo \Bigl( \frac{\delta^{r}}{t^{r}} \Bigr).
\end{split}
\end{align}
From  H\"older's inequality and (\ref{waldp}) we have
\begin{align*} 
\Exp[|\delta \nu(t) -\tau_{\nu(t) }|^{r}]&\leq  \Exp[|\delta \nu(t) -\tau_{\nu(t) }|^{r \vee 2}]^{\frac{r}{r \vee 2}}\\
& \leq c_{r} \, \Bigl(\Exp [\nu(t) ^{\frac{r \vee 2}{2}} ]  \, \Exp[|\tau-\delta|^{r \vee 2}] \Bigr)^{\frac{r}{r \vee 2}}\\
&=  c_{r} \, \Exp\Bigl[\nu(t) ^{\frac{r \vee 2}{2}} \Bigr]^{\frac{r}{r \vee 2}}  \, \Exp\Bigl[|\tau-\delta|^{r \vee 2} \Bigr]^{\frac{r}{r \vee 2}},
\end{align*} 
and, consequently, 
\begin{align*}
\frac{\Exp[|\delta \nu(t) -\tau_{\nu(t) }|^{r}]}{t^{r}} & \leq   c_{r} \, \Exp \Bigl[ \Bigl(\frac{\delta \nu(t) }{t}\Bigr)^{\frac{r \vee 2}{2}} \Bigr] ^{\frac{r}{r \vee 2}}  \;  \frac{\Exp[|\tau-\delta|^{r \vee 2}]^{\frac{r}{r \vee 2}}}{(\delta t)^{\frac{r}{2}}}
\end{align*}
Therefore, from the assumption  $\Exp[|\tau-\delta|^{r \vee 2}] =\calo(\delta^{q_{r \vee 2}})$ and the definition of $\alpha_{r}$ it follows that
\begin{align}  \label{show2}
\frac{\Exp[|\delta \nu(t) -\tau_{\nu(t) }|^{r}]}{t^{r}} &\leq \Exp \Bigl[ \Bigl(\frac{\delta \nu(t) }{t}\Bigr)^{\frac{r \vee 2}{2}} \Bigr] ^{\frac{r}{r \vee 2}} 
  \calo\Bigl(\frac{\delta^{a_{r}}}{t}\Bigr)^{\frac{r}{2}}.
\end{align}
From (\ref{show0})-(\ref{show2}) we obtain
$$\Exp \Bigl[\Big|\frac{\delta \nu(t) }{t}-1\Big|^{r} \Bigr] \leq \calo \Bigl( \frac{\delta}{t} \Bigr)^{r}+ 
 \Exp \Bigl[ \Bigl(\frac{\delta \nu(t) }{t}\Bigr)^{\frac{r \vee 2}{2}} \Bigr] ^{\frac{r}{r \vee 2}}  \; \calo\Bigl(\frac{\delta^{a_{r}}}{t}\Bigr)^{\frac{r}{2}}
$$
and in order to complete the proof it suffices to show that for any $r \geq 1$,
$$\Exp \Bigl[ \Bigl(\frac{\delta \nu(t) }{t}\Bigr)^{\frac{r \vee 2}{2}} \Bigr] =1+o(1) \quad \text{as} \quad \delta,t  \rightarrow \infty \quad \text{so that} \quad \delta=o(t).$$

More specifically, when $r \in [1,2]$, it suffices to show that  if $\Var[\tau]=\Theta(\delta^{q_{2}})$ for some $q_{2} \in [0,2]$ and $\Exp[(\tau^{+})^{r+1}]=\calo(\delta^{r+1})$, then $$ \Exp \Bigl[\frac{\delta \nu(t) }{t} \Bigr]\rightarrow 1 
\quad \text{as} \quad \delta,t  \rightarrow \infty \quad \text{so that} \quad \delta=o(t).$$
This follows directly from  Theorem \ref{theo:0} and  Lemma \ref{lemC}(i). Therefore, (\ref{details}) holds for any $1 \leq r \leq 2$.  When  $r \in [2,4]$, it suffices to show that if $\Exp[|\tau-\delta|^{r}] =\calo(\delta^{q_{r}})$ for some $q_{r} \in [0, r]$ and $\Exp[(\tau^{+})^{r+1}]=\calo(\delta^{r+1})$, then 
$$\Exp \Bigl[ \Bigl(\frac{\delta \nu(t) }{t} \Bigr)^{\frac{r}{2}}\Bigr]\rightarrow 1 \quad \text{as} \quad \delta,t  \rightarrow \infty \quad \text{so that} \quad \delta=o(t).$$ 
This follows from  Lemma \ref{lemC} and the fact that  (\ref{details}) holds for $r/2 \in (1,2)$. This proves (\ref{details}) for $r \in [2,4]$ and the proof is similar when  $r \in [4,8]$,  etc.
\end{proof}

\noindent \underline{Remarks:}

(1) The proof of Theorem \ref{theo:1} provides an alternative way to prove (\ref{rate2}) in the case that $\delta$ is fixed, under the 
additional condition that  $\Exp[(\tau^{+})^{r+1}]< \infty$.  \\

(2) The rate of convergence in $\delta \nu(t) /t \overset{\cL^{r}} \rightarrow 1$ is determined by $q_{r \vee 2}$, which describes the growth of the $r \vee2$-
central moment  of $\tau$ as $\delta \rightarrow \infty$. Specifically, the right-hand side in (\ref{details}) is equal to $\calo(\sqrt{\delta^{r}/t^{r}})$ when $q_{r \vee 2}=r \vee 2$ and   $\calo(\delta^{r}/t^{r})$ when  $q_{r \vee 2}=(r \vee 2)/2$. \\ 

(3) There is a clear dichotomy in the conditions required for $\delta \nu(t)/t \overset{\cL^{r}}  \longrightarrow 1$ to hold as $\delta, t \rightarrow \infty$ so that $\delta=o(t)$. Indeed, when $1 \leq r \leq 2$, 
the \textit{second} central moment of $\tau$  must be finite and of order $\calo(\delta^{2})$; when $r \geq 2$, 
the $r^{th}$ central moment of $\tau$ must be finite and of order $\calo(\delta^{r})$. Compare, in particular, Theorem \ref{theo:0} with the 
following  corollary of Theorem \ref{theo:1}.

\begin{corollary} \label{newcoro}
If   $\Var[\tau]=\calo(\delta^{2})$ and $\Exp[(\tau^{+})^{3}]=\calo(\delta^{3})$  as  $\delta \rightarrow \infty$, then  
$$
\frac{\delta \, \nu(t) }{t} \overset{\cL^{2}}  \longrightarrow 1 \quad \text{as} \quad \delta,t \rightarrow \infty \quad \text{so that} \quad \delta=o(t).
$$
When, in particular,  $\Var[\tau]=\calo(\delta^{q})$ for some $q \in [0,2]$, then as $\delta,t \rightarrow \infty$ so that $\delta=o(t)$:
$$
\Exp\Bigl[\Big|\frac{\delta \nu(t) }{t}-1\Big|^{2}\Bigr] = \calo \Bigl( \frac{\delta}{t} \Bigr)^{2}
+ \calo\Bigl(\frac{\delta^{q-1}}{t}\Bigr).
$$
\end{corollary}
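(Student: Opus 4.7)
The plan is to derive Corollary \ref{newcoro} as a direct specialization of Theorem \ref{theo:1} with $r=2$. The first step is to translate the hypotheses: since $r \vee 2 = 2$ when $r=2$, condition $(*)$ in Theorem \ref{theo:1} reads $\Exp[|\tau-\delta|^{2}] = \calo(\delta^{2})$ and $\Exp[(\tau^{+})^{3}] = \calo(\delta^{3})$. The first of these is exactly $\Var[\tau] = \calo(\delta^{2})$ because $\Exp[|\tau-\delta|^{2}] = \Var[\tau]$, so the hypotheses of the corollary are precisely the hypotheses of Theorem \ref{theo:1} in the case $r=2$. Applying that theorem then yields $\delta \nu(t)/t \overset{\cL^{2}} \to 1$ in the regime $\delta, t \to \infty$ with $\delta = o(t)$.

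For the quantitative bound, I would assume $\Var[\tau] = \calo(\delta^{q})$ with $q \in [0,2]$ and, in the notation of Theorem \ref{theo:1}, take $q_{r \vee 2} = q$, so that
$$\alpha_{r} \;=\; 2\,\frac{q_{r \vee 2}}{r \vee 2} - 1 \;=\; 2\cdot\frac{q}{2} - 1 \;=\; q - 1.$$
Substituting $r = 2$ and $\alpha_{2} = q-1$ into (\ref{details}) then immediately gives
$$\Exp\Bigl[\Big|\frac{\delta \nu(t)}{t} - 1\Big|^{2}\Bigr] \;=\; \calo\Bigl(\frac{\delta}{t}\Bigr)^{2} + \calo\Bigl(\frac{\delta^{q-1}}{t}\Bigr)^{1},$$
which is precisely the claimed estimate.

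The only subtlety worth flagging is that Theorem \ref{theo:1} states the quantitative bound under the hypothesis $\Exp[|\tau-\delta|^{r\vee 2}] = \Theta(\delta^{q_{r\vee 2}})$, while the corollary assumes only the $\calo$-upper bound. However, a glance at the proof of Theorem \ref{theo:1} -- in particular at (\ref{show2}) -- shows that only the upper bound is ever used, so the lower half of the $\Theta$ hypothesis is irrelevant and the substitution is legitimate. I therefore do not expect any genuine obstacle: the corollary is a mechanical specialization of Theorem \ref{theo:1}, obtained by plugging in $r=2$, recognizing $\Exp[|\tau-\delta|^{2}] = \Var[\tau]$, and reading off $\alpha_{2} = q-1$.
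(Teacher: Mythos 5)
Your proof is correct and matches the paper exactly: the paper's entire proof of the corollary reads ``If follows by setting $r=2$ in Theorem \ref{theo:1}.'' Your parenthetical observation about the $\Theta$ versus $\calo$ mismatch between the hypothesis of Theorem \ref{theo:1} and the hypothesis of the corollary is a fair point -- and indeed the body of the proof of Theorem \ref{theo:1} only ever invokes the upper bound, as you noticed -- but this is a minor clarification of the paper's statement rather than a different line of argument.
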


\begin{proof}
If follows by setting $r=2$ in Theorem \ref{theo:1}.
\end{proof}

\subsection{The case of renewal processes} \label{sub22}
We now extend  (\ref{rate1}) 
to the case that $\delta,t \rightarrow \infty$ so that $\delta=o(t)$.

\begin{theorem} \label{coro:1}
Let $r \geq 1$ and suppose that $\Pro(\tau>0)=1$. If  $\Exp[\tau^{r+1}]=\calo(\delta^{r+1})$ as $\delta \rightarrow \infty$, then 
$$
\frac{\delta N(t) }{t} \overset{\cL^{r}}  \longrightarrow 1  \quad \text{as} \quad \delta,t \rightarrow \infty \quad \text{such that} \quad \delta=o(t).
$$
When, in particular, $\Exp[|\tau-\delta|^{r \vee 2}] =\calo(\delta^{q_{r \vee 2}})$, where  $q_{r \vee 2} \in [0, r \vee 2]$, then
\begin{equation} \label{details2}
\Exp\Bigl[\Big|\frac{\delta N(t) }{t}-1\Big|^{r} \Bigr] = \calo \Bigl( \frac{\delta}{t} \Bigr)^{r}
+ \calo\Bigl(\frac{\delta^{\alpha_{r}}}{t}\Bigr)^{\frac{r}{2}} ,
\end{equation}
where $\alpha_{r}:= 2 (q_{r \vee 2} / r \vee 2)-1$.
\end{theorem}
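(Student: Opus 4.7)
The plan is to reduce the statement for $N(t)$ to Theorem~\ref{theo:1} for $\nu(t)$, exploiting the fact that when $\Pro(\tau>0)=1$ the random walk $(\tau_{n})$ is strictly increasing. In that case every crossing of the level $t$ happens immediately after the last value not exceeding $t$, so that $\nu(t)=N(t)+1$ almost surely, and consequently
\begin{equation*}
\frac{\delta N(t)}{t}-1 \;=\; \Bigl(\frac{\delta \nu(t)}{t}-1\Bigr) \;-\;\frac{\delta}{t}.
\end{equation*}
Since $\delta/t\to 0$, any $\cL^{r}$ control of $\delta\nu(t)/t-1$ will immediately pass to $\delta N(t)/t-1$ by the algebraic inequality~(\ref{alge}).

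The real work is to verify that the hypothesis $\Exp[\tau^{r+1}]=\calo(\delta^{r+1})$ of Theorem~\ref{coro:1} implies condition~$(*)$ of Theorem~\ref{theo:1}. Since $\tau\geq 0$, one has $(\tau^{+})^{r+1}=\tau^{r+1}$, so the second half of $(*)$ is immediate. For the first half I would split on the value of $r$. When $r\in[1,2]$ we have $r\vee 2=2$, and Jensen/Hölder applied to the nonnegative random variable $\tau$ yields
\begin{equation*}
\Exp[\tau^{2}] \;\leq\; \bigl(\Exp[\tau^{r+1}]\bigr)^{2/(r+1)}\;=\;\calo(\delta^{2}),
\end{equation*}
so that $\Exp[|\tau-\delta|^{2}]=\Var[\tau]\leq \Exp[\tau^{2}]=\calo(\delta^{2})$. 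When $r\geq 2$ we have $r\vee 2 = r$, and the algebraic inequality~(\ref{alge}) combined with Hölder gives
\begin{equation*}
\Exp[|\tau-\delta|^{r}] \;\leq\; 2^{r-1}\bigl(\Exp[\tau^{r}]+\delta^{r}\bigr) \;\leq\; 2^{r-1}\bigl(\bigl(\Exp[\tau^{r+1}]\bigr)^{r/(r+1)}+\delta^{r}\bigr)\;=\;\calo(\delta^{r}).
\end{equation*}
In both cases condition~$(*)$ of Theorem~\ref{theo:1} holds. Applying that theorem to $\nu(t)$ gives the desired $\cL^{r}$ convergence of $\delta\nu(t)/t$ together with the quantitative bound~(\ref{details}).

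To conclude, I would apply~(\ref{alge}) to the decomposition displayed above to obtain
\begin{equation*}
\Exp\Bigl[\Big|\frac{\delta N(t)}{t}-1\Big|^{r}\Bigr] \;\leq\; 2^{r-1}\,\Exp\Bigl[\Big|\frac{\delta\nu(t)}{t}-1\Big|^{r}\Bigr] + 2^{r-1}\Bigl(\frac{\delta}{t}\Bigr)^{r},
\end{equation*}
and then substitute the bound~(\ref{details}) from Theorem~\ref{theo:1}. The extra term $(\delta/t)^{r}$ is absorbed into the already-present $\calo(\delta/t)^{r}$, which produces exactly~(\ref{details2}). The main obstacle is genuinely minor and lies in the moment interpolation above: once the hypotheses of Theorem~\ref{theo:1} are secured from the single moment assumption on $\tau$, the transfer from $\nu(t)$ to $N(t)$ is essentially free because their difference is the deterministic constant $1$.
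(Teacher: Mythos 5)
Your proof is correct and follows essentially the same route as the paper: reduce to Theorem~\ref{theo:1} via the identity $\nu(t)=N(t)+1$, absorb the extra $\delta/t$ term using~(\ref{alge}), and verify that the single hypothesis $\Exp[\tau^{r+1}]=\calo(\delta^{r+1})$ implies condition~$(*)$. The only cosmetic difference is in bounding $\Exp[|\tau-\delta|^{r\vee 2}]$: you split via~(\ref{alge}) and the variance identity, whereas the paper uses the one-line bound $\Exp[|\tau-\delta|^{r\vee 2}]\leq\Exp[\tau^{r\vee 2}]$ (valid because $\tau>0$) followed by H\"older -- both yield $\calo(\delta^{r\vee 2})$.
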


\begin{proof}
It clearly suffices to show (\ref{details2}). Since $\nu(t) =N(t) +1$ when $\Pro(\tau>0)=1$,
we have  
\begin{align*}
\Big|\frac{\delta N(t) }{t}-1\Big| &\leq  \Big|\frac{\delta \nu(t) }{t}-1\Big|  + \frac{\delta}{t},
\end{align*}
thus, applying the algebraic inequality (\ref{alge}) and taking expectations we obtain
$$
\Exp\Bigl[ \Big|\frac{\delta N(t) }{t}-1\Big|^{r} \Bigr] \leq c_{r} \;  \Bigl( \Exp \Bigl[ \Big|\frac{\delta \nu(t) }{t}-1\Big|^{r} \Bigr]  + \Bigl(\frac{\delta}{t}\Bigr)^{r} \Bigr).
$$
Therefore, it  suffices to show that condition (*) of Theorem \ref{theo:1} is implied by $\Exp[\tau^{r+1}]=\calo(\delta^{r+1})$ when $\Pro(\tau>0)=1$. 
Indeed, for any $r \geq 1$ we have
$$\Exp[|\tau-\delta|^{r \vee 2}] \leq \Exp[\tau^{r \vee 2}]  \leq \Exp[\tau^{r+1}]^{\frac{r\vee 2}{r+1}}=\calo(\delta^{r+1})^{\frac{r\vee 2}{r+1}}= \calo(\delta^{r \vee 2}),$$
where the first inequality is due to $\Pro(\tau>0)=1$ and the  second one follows from H\"older's inequality and the fact that $r \vee 2 \leq r+1$.    
\end{proof}

We now illustrate the previous theorems for two classes of renewal processes.

\subsubsection{Renewals stochastically less variable than the Poisson process}  

Let us start by showing that the  condition of Theorem \ref{coro:1}  is satisfied when $\tau$ is exponentially distributed. Indeed, for any $r \geq 1$, 
if we denote by  $\lceil r \rceil :=\min\{n \in \mathbb{N}: n \geq r\}$ the ceiling of $r$, then
\begin{align*}
\Exp[\tau^{r+1}] & \leq \Exp[\tau^{\lceil r+1 \rceil}]^{\frac{r+1}{ \lceil r+1 \rceil}} \leq \delta^{r+1},
\end{align*} 
where the  first  inequality follows from H\"older's inequality and the second one holds  because $\Exp[\tau^{n}]= \delta^{n} / n! \leq \delta^{n}$ for any $n \in \mathbb{N}$. 

More generally, suppose that  $\tau$ is not itself exponentially distributed, but  is \textit{stochastically less variable} than an exponential random variable $\tau'$  that has mean $\delta$, i.e., 
$$\Exp[g(\tau)] \leq \Exp[g(\tau')] \; \forall \; \text{convex and increasing function} \; g: (0, \infty) \rightarrow (0, \infty).$$ 
Then, since $x \rightarrow x^{r+1}$ is convex and increasing  on $(0, \infty)$, it follows that  $\Exp[\tau^{r+1}] \leq \Exp[(\tau')^{r+1}] \leq  \delta^{r+1}$, which proves that the  condition of Theorem \ref{coro:1} is still satisfied when $\tau$ is stochastically less variable than an exponential random variable.  This is, for example,  the case when $\tau$ is \textit{new better than used in expectation}, i.e., 
$\Exp[\tau-t|\tau>t] \leq \Exp[\tau]$ for every $t>0$   (see, e.g., \cite{ross}, pp. 435-437).  

\subsubsection{First-hitting times of spectrally negative L\'evy process}

Suppose that $\tau= \inf\{t: Y_{t} \geq \D \}$, where $\D>0$  and $(Y_{t})_{t \geq 0}$ is a spectrally negative L\'evy process with positive drift. That is, 
$(Y_{t})_{t \geq 0}$ is a stochastic process that is continuous in probability, has stationary and independent increments, does not have positive jumps and 
$\Exp[Y_{t}]=\mu \, t$  for some $\mu>0$. 
Then,  from Wald's identity it follows that $\delta=\Exp[\tau]=\D/\mu$ and if, additionally, $\Exp[(Y_{1}^{-})^{r+1}]<\infty$, then from Theorem 4.2 in \cite{gut2} we have  
$$
\Exp[\tau^{r+1}]=(\D /\mu)^{r+1} (1+o(1)) \quad \text{as}  \quad \D \rightarrow \infty.
$$
Note that $\delta \rightarrow \infty$ if either $\D \rightarrow \infty$ or $\mu \rightarrow 0$. Thinking of $(\tau_{n})$ as  sampling times of $(Y_{t})$, it is natural to consider $\mu$ as fixed and to assume that the sampling period $\delta$ is controlled by threshold $\D$. This implies that $\Exp[\tau^{r+1}]=\Theta(\delta^{r+1})$ as $\delta \rightarrow \infty$ and proves that the condition of Theorem \ref{coro:1} is satisfied.

Furthermore, if $\Var[Y_{t}]=\sigma^{2} \, t$ for some $\sigma>0$, then $\Var[\tau]= (\sigma^{2}/\mu^{3}) \, \D$, which implies that $\Var[\tau]=\Theta(\delta)$ as $\delta \rightarrow \infty$. Therefore, 
from Theorem \ref{coro:1} we obtain
$$
\Exp\Bigl[\Big|\frac{\delta N(t) }{t}-1\Big| \Bigr] = \calo \Bigl( \frac{\delta}{t} \Bigr).
$$

\noindent \underline{\textbf{Remark:}} Analogous results can be obtained when $(Y_{t})_{t \in \mathbb{N}}$ is a random walk and/or $\tau= \inf\{t: |Y_{t}|
\geq \D \}$, a setup that we consider in detail in Subsection \ref{sub32}.

\subsection{A low-rate version of Anscombe's theorem} \label{sub23}
Let $(Y_{t})_{t \in \mathbb{N}}$ be a sequence of random variables that is uniformly continuous in probability  and converges in distribution as $t  \rightarrow \infty$ to a random variable, $Y_{\infty}$, i.e.,  $Y_{t}  \overset{\cD} \rightarrow Y_{\infty}$ as $t \rightarrow \infty$.
Suppose that $(Y_{t})_{t \in \mathbb{N}}$ can only be observed at a sequence of random times $(\tau_{n})_{n \in \mathbb{N}}$ that form a renewal process.  As before, we set $\tau:=\tau_{1}$,  $N(t):=\max\{n: \tau_{n} < t\}$ and $\tau(t):=\tau_{N(t)}$ is the most recent sampling time at time $t$.
In the following theorem we show that this convergence remains valid when we replace $t$  with the most recent sampling time, $\tau(t)$, 
even if $\delta  \rightarrow \infty$ so that $\delta=o(t)$, as long as the second moment of $\tau$ grows at most as the square of its mean,  $\delta^{2}$.

\begin{theorem} \label{lem:31}
If $\Exp[\tau^{2}]=\calo(\delta^{2})$, or equivalently $\Var[\tau]=\calo(\delta^{2})$,  then $\tau(t)/t \overset{\cL^{1}}  \rightarrow 1$ 
and, consequently, $Y_{\tau(t)} \overset{\cD}  \rightarrow Y_{\infty}$ as $\delta, t \rightarrow \infty$ so  that $\delta=o(t)$.
\end{theorem}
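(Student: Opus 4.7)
My plan is to reduce both conclusions to a single $\cL^{1}$ estimate on $\tau(t)/t$: since $Y$ is uniformly continuous in probability, Anscombe's theorem immediately yields $Y_{\tau(t)} \overset{\cD}{\longrightarrow} Y_{\infty}$ once $\tau(t)/t \to 1$ in probability, and $\cL^{1}$ convergence is strictly stronger. A useful preliminary observation is that, because $(\tau_{n})$ is a renewal process, $\tau(t) = \tau_{N(t)} \in [0,t]$; consequently $|1-\tau(t)/t| = 1 - \tau(t)/t$, so the problem reduces to showing $\Exp[\tau(t)]/t \to 1$.

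To compute this expectation I would use the identity $\tau(t) = \tau_{\nu(t)} - B(t)$, where $B(t) := \tau_{\nu(t)} - \tau_{\nu(t)-1}$ is the inter-arrival interval straddling $t$; this follows from $\nu(t) = N(t)+1$, which holds under $\Pro(\tau>0)=1$. Wald's identity applied at the stopping time $\nu(t)$ then gives
\begin{equation*}
\frac{\Exp[\tau(t)]}{t} = \frac{\delta\, \Exp[\nu(t)]}{t} - \frac{\Exp[B(t)]}{t}.
\end{equation*}
Since the hypothesis $\Var[\tau]=\calo(\delta^{2})$ is equivalent to $\Exp[\tau^{2}]=\calo(\delta^{2})$---exactly the condition of Theorem \ref{coro:1} with $r=1$---the first term converges to $1$.

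The main obstacle is to show the size-biased quantity $\Exp[B(t)]/t$ vanishes: Lorden's inequality controls only the excess $\tau_{\nu(t)}-t$, not the straddling interval itself. I would handle this through a second moment estimate. Decomposing
\begin{equation*}
\Exp[B(t)^{2}] = \sum_{n\geq 1} \Exp\bigl[(\tau_{n} - \tau_{n-1})^{2} \, \ind{\nu(t)=n}\bigr],
\end{equation*}
conditioning on $\tau_{n-1}$, and using the independence of $\tau_{n}-\tau_{n-1}$ from $\tau_{n-1}$ together with $\{\nu(t) \geq n\} = \{\tau_{n-1}\leq t\}$, one obtains
\begin{equation*}
\Exp[B(t)^{2}] \leq \Exp[\tau^{2}] \sum_{n \geq 1} \Pro(\tau_{n-1} \leq t) = \Exp[\tau^{2}] \, \Exp[\nu(t)] = \calo(\delta^{2}) \cdot \calo(t/\delta) = \calo(\delta\, t).
\end{equation*}
Cauchy--Schwarz then yields $\Exp[B(t)]/t \leq \sqrt{\Exp[B(t)^{2}]}/t = \calo(\sqrt{\delta/t}) \to 0$ since $\delta = o(t)$.

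Combining the two estimates gives $\Exp[\tau(t)]/t \to 1$ and therefore the desired $\cL^{1}$ convergence, after which Anscombe's theorem closes the argument.
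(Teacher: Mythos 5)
Your proof is correct, but it reaches the key bound $\Exp[t-\tau(t)]/t \to 0$ by a genuinely different route than the paper. The paper's proof is a one-liner: it cites Lorden's inequality for the \emph{age} of a renewal process directly, $\Exp[t-\tau(t)] \leq \Exp[\tau^{2}]/\delta$, giving $\Exp[|1-\tau(t)/t|] = \calo(\delta/t)$. You instead note $0 \leq t-\tau(t) \leq B(t)$ where $B(t):=\tau_{\nu(t)}-\tau_{\nu(t)-1}$ is the straddling interval, and bound $\Exp[B(t)]$ via a second-moment computation: decomposing over $\{\nu(t)=n\}$, using the independence of $\tau_{n}-\tau_{n-1}$ from $\{\tau_{n-1}\leq t\}=\{\nu(t)\geq n\}$, and summing, you get $\Exp[B(t)^{2}] \leq \Exp[\tau^{2}]\,\Exp[\nu(t)] = \calo(\delta t)$, so Cauchy--Schwarz yields $\Exp[B(t)]/t = \calo(\sqrt{\delta/t})$. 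Your argument is self-contained and more elementary (it re-derives from scratch a weaker form of the age bound, instead of quoting it), which is pedagogically cleaner; the trade-off is the rate: you obtain $\calo(\sqrt{\delta/t})$ where Lorden gives $\calo(\delta/t)$. For the theorem as stated this is immaterial, since $\delta = o(t)$ makes both vanish, but the faster rate from Lorden's bound is reused by the paper in later results (e.g.\ Theorem~\ref{theo:32}(iii) and Theorem~\ref{theo:33}(ii)), where the distinction between $\calo(\delta/t)$ and $\calo(\sqrt{\delta/t})$ actually matters. One small remark: the Wald decomposition $\Exp[\tau(t)] = \delta\Exp[\nu(t)] - \Exp[B(t)]$ is more than you need; the bound $t-\tau(t) \leq B(t)$ suffices, and sidesteps invoking Theorem~\ref{coro:1} for the first term.
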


\begin{proof} 
From Lorden's  bound (see \cite{lorden}, p.526) on the expected value of the age of a renewal process and the assumption of the theorem we obtain
\begin{equation} \label{age}
\Exp[t-\tau(t)]  \leq \Exp[\tau^{2}] / \delta= \calo(\delta).
\end{equation} 
Therefore, 
$$\Exp\Bigl[\Big|\frac{\tau(t)}{t}-1\Big|\Bigr] =  \frac{\Exp[t-\tau(t)]}{t} =\calo\Bigl(\frac{\delta}{t}\Bigr),$$
which proves that $\tau(t)/t \overset{\cL^{1}}  \rightarrow 1$ (and, consequently,  $Y_{\tau(t)} \overset{\cD}  \rightarrow Y_{\infty}$, due to Anscombe's theorem) as  $\delta, t \rightarrow \infty$ so  that $\delta=o(t)$. Finally, since $\Exp[\tau^{2}]= \delta^{2}+ \Var[\tau]$, it is clear that $\Exp[\tau^{2}]=\calo(\delta^{2})$ is equivalent to $\Var[\tau]=\calo(\delta^{2})$. 
\end{proof}

\noindent \underline{\textbf{Remarks:}} 
(1) The condition of Theorem  \ref{lem:31} is satisfied when $(\tau_{n})$ is the Binomial process, i.e., sampling at any time $t \in \mathbb{N}$ has probability $1/\delta$ and is independent of the past. Indeed, in this case $\tau$ is geometrically distributed with mean $\delta$ and its variance is equal to  $\Var[\tau]= \delta^{2} (1 - \delta^{-1})$. \\ 

(2) Theorem \ref{lem:31} also applies when $(\tau_{n})$ corresponds to the hitting times of random walk, a setup that we consider in detail in Subsection \ref{sub32}.

\section{Low-rate estimation} \label{sec3}
In this section,  $(X_{t})_{t \in \mathbb{N}}$ is a sequence  of i.i.d. random variables with unknown mean $\mu:=\Exp[X]$ and finite standard deviation 
$\sigma:=\sd[X]$, where $X:=X_{1}$. 
From the Central Limit Theorem we know that the sample mean  is an asymptotically normal estimator of $\mu$, i.e., 
\begin{equation} \label{ans2}
\frac{\mu_{t}-\mu}{\sigma/\sqrt{t}} \longrightarrow \cN(0,1)  \quad \text{as} \quad t \rightarrow \infty,
\end{equation}
where $\mu_{t}:=S_{t}/t$ and $S_{t}:=X_{1}+ \ldots+X_{t}$.

\subsection{Sampling at an arbitrary renewal process} \label{sub31}
When the random walk $(S_{t})_{t \in \mathbb{N}}$ is observed only at a sequence of random times $(\tau_{n})_{n \in \mathbb{N}}$ that form a renewal process,
two  natural modifications of $\mu_{t}$ are 
\begin{equation} \label{bar}
\bar{\mu}_{t}= \frac{S_{\tau(t)}}{\tau(t)} \quad \text{and} \quad \tilde{\mu}_{t}= \frac{S_{\tau(t)}}{t}.
\end{equation}
That is, $\bar{\mu}_{t}$  is the sample mean evaluated at the most recent sampling time $\tau(t)=\tau_{N(t)}$ and is well defined only when $t \geq \tau_{1}$, in contrast to 
$\tilde{\mu}_{t}$ that is well defined for any $t>0$. The following theorem describes the  asymptotic behavior of these estimators when $\delta, t \rightarrow \infty$ so that $\delta=o(t)$. 

\begin{theorem}  \label{ans3}
Suppose that $\Var[\tau]=\calo(\delta^{2})$ as $\delta \rightarrow \infty$. 
\begin{enumerate}
\item[(i)] If  $\delta, t \rightarrow \infty$ so that  $\delta=o(t)$, then $\bar{\mu}_{t} \overset{p} \rightarrow \mu$ and 
\begin{equation} \label{clt1}
\frac{\bar{\mu}_{t}-\mu}{\sigma/\sqrt{t}} \rightarrow \cN(0,1).
\end{equation}
\item[(ii)] If  $\delta, t \rightarrow \infty$ so that  $\delta=o(t)$, then $\tilde{\mu}_{t}\overset{p} \rightarrow \mu$.
If also $\delta=o(\sqrt{t})$, then 
\begin{equation} \label{clt2}
\frac{\tilde{\mu}_{t}-\mu}{{\sigma/\sqrt{t}}} \rightarrow \cN(0,1).
\end{equation}
\end{enumerate}
\end{theorem}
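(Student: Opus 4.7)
The plan is to reduce both parts to the classical central limit theorem and Anscombe's theorem, combined with Theorem~\ref{lem:31}, which under the hypothesis $\Var[\tau]=\calo(\delta^{2})$ delivers $\tau(t)/t\overset{\cL^{1}}{\longrightarrow} 1$ as $\delta,t\to\infty$ with $\delta=o(t)$, together with the explicit bound $\Exp[t-\tau(t)]=\calo(\delta)$ that appears in its proof via Lorden's age inequality.

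For part (i), I would write
\[
\frac{\bar{\mu}_{t}-\mu}{\sigma/\sqrt{t}}
=\frac{S_{\tau(t)}-\mu\,\tau(t)}{\sigma\sqrt{\tau(t)}}\cdot\sqrt{\frac{\tau(t)}{t}}.
\]
The first factor tends to $\cN(0,1)$ in distribution by Anscombe's theorem applied to the i.i.d.\ random walk $(S_{n}-\mu n)$ at the random index $\tau(t)$: this is legitimate because $\tau(t)/t\to 1$ in probability by Theorem~\ref{lem:31}, and the normalized partial sums $(S_{n}-\mu n)/(\sigma\sqrt{n})$ are uniformly continuous in probability (a standard consequence of the i.i.d.\ structure of $(X_{n})$). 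The second factor tends to $1$ in probability for the same reason, so Slutsky's theorem yields (\ref{clt1}). Consistency $\bar{\mu}_{t}\overset{p}\to\mu$ then follows from $\sigma/\sqrt{t}\to 0$.

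For part (ii), the key decomposition is
\[
\frac{\tilde{\mu}_{t}-\mu}{\sigma/\sqrt{t}}
=\frac{S_{\tau(t)}-\mu\,\tau(t)}{\sigma\sqrt{t}}+\frac{\mu\,(\tau(t)-t)}{\sigma\sqrt{t}}.
\]
The first term equals $\left((\bar{\mu}_{t}-\mu)\sqrt{t}/\sigma\right)\cdot(\tau(t)/t)$ and tends to $\cN(0,1)$ in distribution by part (i) together with Slutsky. For the second term, the age bound $\Exp[|\tau(t)-t|]=\Exp[t-\tau(t)]=\calo(\delta)$ (inequality (\ref{age}) in the proof of Theorem~\ref{lem:31}) implies that it is of order $\calo(\delta/\sqrt{t})$ in $\cL^{1}$, hence vanishes in probability under the sharper hypothesis $\delta=o(\sqrt{t})$, and (\ref{clt2}) follows. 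Consistency $\tilde{\mu}_{t}\overset{p}\to\mu$ under the weaker hypothesis $\delta=o(t)$ is easier and follows by writing $\tilde{\mu}_{t}=\bar{\mu}_{t}\cdot\tau(t)/t$ and combining the convergence $\bar{\mu}_{t}\overset{p}\to\mu$ from part (i) with $\tau(t)/t\overset{p}\to 1$.

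The main subtlety is the invocation of Anscombe's theorem in a doubly-asymptotic regime where the renewal process itself depends on $\delta$ and $\delta=\delta(t)$ varies with $t$. This is not a genuine obstacle, because Anscombe's theorem requires only two ingredients: the uniform continuity in probability of the normalized sums $(S_{n}-\mu n)/(\sigma\sqrt{n})$, which depends solely on the i.i.d.\ structure of $(X_{n})$ and is unaffected by the sampling mechanism, and the convergence in probability of the random index divided by a deterministic scale, which is exactly the conclusion of Theorem~\ref{lem:31}. The rate gap between parts (i) and (ii) is the expected one: the denominator $t$ in $\tilde{\mu}_{t}$ introduces an additional age bias of size $\mu(t-\tau(t))$, and matching this against the $\sqrt{t}$ scaling forces the stronger rate assumption $\delta=o(\sqrt{t})$.
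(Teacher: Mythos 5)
Your proof is correct and follows essentially the same route as the paper's: part (i) is Theorem~\ref{lem:31} (Anscombe plus $\tau(t)/t\overset{p}\to 1$) applied to the u.c.i.p.\ sequence $(S_{n}-\mu n)/(\sigma\sqrt{n})$ together with Slutsky, and part (ii) reduces to showing a $\sqrt{t}$-scaled correction term vanishes via the age bound $\Exp[t-\tau(t)]=\calo(\delta)$. The only cosmetic difference is in part (ii), where the paper controls $\sqrt{t}\,|\bar{\mu}_{t}-\tilde{\mu}_{t}|=|\bar{\mu}_{t}|\,(t-\tau(t))/\sqrt{t}$ whereas you split $\tilde{\mu}_{t}-\mu$ directly into an Anscombe term plus the deterministic age bias $\mu(\tau(t)-t)/(\sigma\sqrt{t})$; both rest on the same ingredients and lead to the same rate restriction $\delta=o(\sqrt{t})$.
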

\begin{proof}

(i) From the Strong Law of Large Numbers we know that $\mu_{t} \overset{a.s.} \rightarrow \mu$ as $t \rightarrow \infty$. Theorem \ref{lem:31} implies that $\tau(t)/t  \overset{p} \rightarrow 1$ and, consequently, $\tau(t) \overset{p} \rightarrow \infty$ and $\bar{\mu}_{t}  \overset{p} \rightarrow \mu$ as $\delta, t\rightarrow \infty$ so that $\delta=o(t)$ (see, e.g., \cite{gut}, page 12). From Theorem \ref{lem:31} and (\ref{ans2}) we obtain (\ref{clt1}).


(ii) We observe  that 
\begin{equation} \label{qaz}
|\bar{\mu}_{t} - \tilde{\mu}_{t}| =  |S_{\tau(t)}| \, \frac{t -\tau(t)}{t \,  \tau(t)} = |\bar{\mu}_{t}| \, \Bigl(1- \frac{\tau(t)}{t}\Bigr).
\end{equation}
Since  $\tau(t)/t  \overset{p} \rightarrow 1$ and $\bar{\mu}_{t} \overset{p} \rightarrow \mu$  as $\delta, t\rightarrow \infty$ so that $\delta=o(t)$,
(\ref{qaz}) clearly implies that  $\tilde{\mu}_{t} \overset{p} \rightarrow \mu$ as $\delta, t\rightarrow \infty$ so that $\delta=o(t)$.  Finally, in order to prove (\ref{clt2}),  it suffices to show that
$$
\sqrt{t} |\bar{\mu}_{t} - \tilde{\mu}_{t}| = |\bar{\mu}_{t}| \, \frac{t -\tau(t)}{\sqrt{t}}  \overset{p} \longrightarrow 0 \quad \text{as} \quad \delta,t \rightarrow \infty \; \text{so that} \; \delta=o(\sqrt{t}).
$$ This is indeed the case since  from (i) we have that $\bar{\mu}_{t} \overset{p} \rightarrow \mu$ when $\delta=o(t)$, whereas from (\ref{age}) it follows that $(t -\tau(t))/ \sqrt{t} \overset{\cL^{1}} \longrightarrow  0$ when $\delta=o(\sqrt{t})$.
\end{proof}

\noindent \underline{\textbf{Remark:}} When the sampling period $\delta$ is fixed as $t \rightarrow  \infty$,  it is clear that both  $\bar{\mu}_{t}$  and $\tilde{\mu}_{t}$  preserve the asymptotic distribution (\ref{ans2}). On the other hand, when $\delta \rightarrow \infty$ the asymptotic behavior of the two estimators differs, since $\tilde{\mu}_{t}$ requires that the sampling rate should not be too low  ($\delta << \sqrt{t}$), a  condition that is not necessary for 
$\bar{\mu}_{t}$. Therefore,  $\bar{\mu}_{t}$ and $\tilde{\mu}_{t}$ have similar behavior for small $\delta$, but  $\bar{\mu}_{t}$ is  more robust than $\tilde{\mu}_{t}$ under low-rate sampling, a conclusion that is also verified empirically in Figure \ref{fig:1}.

\subsection{Sampling at first hitting times} \label{sub32}
We now focus on the case that the random walk $(S_{t})_{t \in \mathbb{N}}$ is being sampled whenever it changes by a fixed amount $\D$ since the previous sampling instance, i.e., 
\begin{equation} \label{modelo}
\tau_{n}:= \inf\{t \geq \tau_{n-1}: S_{t}- S_{\tau_{n-1}} \geq \D \}, \quad n \in \mathbb{N},
\end{equation} 
where, for simplicity, we have assumed that $\mu>0$. 
Then, the estimators $\bar{\mu}_{t}$, $\tilde{\mu}_{t}$, defined in  (\ref{bar}),  take the form
\begin{align} 
\bar{\mu}_{t}&=\frac{1}{\tau(t)} \,  \sum_{n=1}^{N(t) } \Bigl[\D+\eta_{n}\Bigr] = \frac{N(t)  \, \D}{\tau(t)}+ \frac{1}{\tau(t)} \, \sum_{n=1}^{N(t)} \eta_{n} , \label{bar2} \\
 \tilde{\mu}_{t}&=\frac{1}{t} \,  \sum_{n=1}^{N(t) } \Bigl[\D+\eta_{n} \Bigr]= \frac{N(t)  \, \D}{t} + \frac{1}{t} \, \sum_{n=1}^{N(t)} \eta_{n}, \label{tilde2}
\end{align}
where $\eta_{n}:=S_{\tau_{n}}-S_{\tau_{n-1}}-\D$. Since $X$ has a finite second moment, the overshoots $(\eta_{n})$ are i.i.d. with $\Exp[\eta]=\calo(1)$ as $\D \rightarrow \infty$, 
where $\eta:=\eta_{1}$ (see, e.g., Theorem 10.5 in \cite{gut}). Therefore, from  Wald's identity it follows that 
\begin{equation} \label{walder}
\delta =\Exp[\tau]=\frac{\Exp[S_{\tau}]}{\mu}= \frac{\D +\Exp[\eta]}{\mu}
\end{equation} 
and, since we consider $\mu$ to be fixed, the sampling period $\delta$ is controlled by threshold $\D$, in the  sense that  
$\delta= \D/\mu +\calo(1)=\Theta(\D)$ as $\D \rightarrow \infty$.  Moreover, since
\begin{equation} \label{neon}
\Var[\tau]= (\sigma^{2}/\mu^{3}) \, \D \,  (1+o(1)) \quad  \text{as} \quad \D \rightarrow \infty,
\end{equation}
(see, e.g., Theorem 9.1 in \cite{gut}), it is clear that $\Var[\tau]=\Theta(\delta)$ as $\delta \rightarrow \infty$.
Therefore, as $\delta,t \rightarrow \infty$ so that $\delta=o(t)$,  from Theorems \ref{theo:1} and  \ref{coro:1} we have 
\begin{equation} \label{lr}
\Exp \Bigl[\Big|\frac{\delta \nu(t) }{t}-1\Big|\Bigr] =\calo(\delta/t) \quad \text{and} \quad
\Exp \Bigl[\Big|\frac{\delta N(t) }{t}-1\Big|\Bigr] =\calo(\delta/t),
\end{equation}
from Theorem \ref{lem:31} we obtain
\begin{equation} \label{taut}
\tau(t)/t \overset{p} \rightarrow 1,
\end{equation}
and we also observe that the  condition of Theorem \ref{ans3} is satisfied. However,  $\bar{\mu}_{t}$ and  $\tilde{\mu}_{t}$ are not applicable when the overshoots $(\eta_{n})_{n \in \mathbb{N}}$ are unobserved,  i.e., when at each time $\tau_{n}$ we do not learn the excess of $S_{\tau_{n}}- S_{\tau_{n-1}}$ over $\D$. In this case, $\bar{\mu}_{t}$ and  $\tilde{\mu}_{t}$ reduce to 
\begin{equation} \label{hatcjeck}
\hat{\mu}_{t}:= \frac{N(t)  \, \D}{\tau(t)} \qquad \text{and} \qquad  \check{\mu}_{t}:= \frac{N(t)  \, \D}{t},
\end{equation}
respectively. In the following theorem
we show that both $\hat{\mu}_{t}$ and $\check{\mu}_{t}$ are consistent when $t$ \textit{and} $\delta \rightarrow \infty$, but not for fixed $\delta$.
More surprisingly, we also show that if the sampling rate is sufficiently \textit{low} ($\sqrt{t} << \delta << t$), then $\hat{\mu}_{t}$ preserves the asymptotic distribution (\ref{ans2}). On the other hand, $\check{\mu}_{t}$ fails to do so for any sampling rate.

\begin{theorem} \label{theo:32}
\begin{enumerate}
\item[(i)] If $\delta,t \rightarrow \infty$ so that $\delta=o(t)$, then
$$
\Exp[|\check{\mu}_{t}-\mu|] = \calo \Bigl(\frac{\delta}{t} \Bigr)+ \Exp[\eta] \, \calo \Bigl(\frac{1}{\delta} \Bigr),
$$ 
and, consequently, $\sqrt{t} \, \Exp[|\check{\mu}_{t}-\mu|] = \calo(1)$ when $\delta=\Theta(\sqrt{t})$.

\item[(ii)] If $\delta,t \rightarrow \infty$ so that $\delta=o(t)$, then $\hat{\mu}_{t} \overset{p} \rightarrow \mu$.

\item[(iii)] If $\delta,t \rightarrow \infty$ so that $\sqrt{t} << \delta <<t$, i.e., $\delta=o(t)$ and $\sqrt{t}=o(\delta)$, then 
$$ \frac{\hat{\mu}_{t}-\mu}{\sigma/\sqrt{t}} \rightarrow \cN(0,1).
$$
\end{enumerate}
\end{theorem}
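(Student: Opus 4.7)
My plan is to handle the three parts in order, leaning on the renewal-theoretic estimates already derived in this subsection: $\Exp[|\delta N(t)/t - 1|] = \calo(\delta/t)$ from (\ref{lr}), $\tau(t)/t \overset{p}{\rightarrow} 1$ from (\ref{taut}), the CLT for $\bar{\mu}_{t}$ from Theorem \ref{ans3}(i), and Wald's identity applied to the stopping time $\nu(t)$. The crucial algebraic fact is the identity $\Delta = \mu\delta - \Exp[\eta]$ coming from (\ref{walder}), used together with $\Exp[\eta] = \calo(1)$ as $\Delta \rightarrow \infty$ (a consequence of Lorden's overshoot bound, valid since $\Exp[X^{2}]<\infty$).

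For part (i), substituting $\Delta = \mu\delta - \Exp[\eta]$ into the definition of $\check{\mu}_{t}$ yields the clean decomposition
\begin{align*}
\check{\mu}_{t} - \mu \; = \; \mu \Bigl(\frac{\delta N(t)}{t} - 1\Bigr) \, - \, \frac{\Exp[\eta]}{\delta}\cdot \frac{\delta N(t)}{t}.
\end{align*}
Taking expected absolute values and inserting (\ref{lr}) together with the bound $\Exp[\delta N(t)/t] = 1 + \calo(\delta/t)$ produces the stated rate, and the consequence for $\delta = \Theta(\sqrt{t})$ is immediate. For part (ii), I factor $\hat{\mu}_{t} = \check{\mu}_{t}\cdot (t/\tau(t))$: part (i) gives $\check{\mu}_{t} \rightarrow \mu$ in $L^{1}$ (hence in probability), while (\ref{taut}) gives $t/\tau(t) \rightarrow 1$ in probability, so Slutsky yields $\hat{\mu}_{t} \overset{p}{\rightarrow} \mu$.

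For part (iii), I compare $\hat{\mu}_{t}$ with the overshoot-corrected estimator $\bar{\mu}_{t}$ of (\ref{bar2}), for which the CLT is already established. Since $\bar{\mu}_{t} - \hat{\mu}_{t} = (1/\tau(t))\sum_{n=1}^{N(t)}\eta_{n}$, the decomposition
\begin{align*}
\sqrt{t}\,(\hat{\mu}_{t} - \mu) \; = \; \sqrt{t}\,(\bar{\mu}_{t} - \mu) \, - \, \frac{\sqrt{t}}{\tau(t)} \sum_{n=1}^{N(t)} \eta_{n}
\end{align*}
reduces the task to showing that the second summand vanishes in probability. The first term converges in distribution to $\cN(0,\sigma^{2})$ by Theorem \ref{ans3}(i), whose hypothesis $\Var[\tau] = \calo(\delta^{2})$ is supplied by (\ref{neon}). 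For the remainder I exploit $\eta_{n}\geq 0$ and $N(t)\leq \nu(t)$ to bound it in absolute value by $(\sqrt{t}/\tau(t))\sum_{n=1}^{\nu(t)}\eta_{n}$; Wald's identity then gives $\Exp\!\left[\sum_{n=1}^{\nu(t)}\eta_{n}\right] = \Exp[\nu(t)]\,\Exp[\eta] = \calo(t/\delta)$. Combined with $\tau(t)/t \rightarrow 1$, the remainder has $L^{1}$-norm $\calo(\sqrt{t}/\delta)$, which vanishes precisely because $\sqrt{t} = o(\delta)$; Slutsky then delivers the CLT.

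The delicate step is the control of this overshoot remainder: because $N(t)$ is not itself a stopping time, Wald's identity is not directly available, and it is the nonnegativity of the $\eta_{n}$'s that rescues the argument, allowing passage to $\nu(t)$ at the cost of only a one-sided inequality. The low-rate hypothesis $\sqrt{t} \ll \delta$ is exactly what is needed to kill the $\calo(\sqrt{t}/\delta)$ error that this bound produces, and it explains why $\hat{\mu}_t$ requires strictly slower communication than the classical $\sqrt{t}$ regime.
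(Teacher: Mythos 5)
Your proof is correct, and for parts (i) and (iii) it follows essentially the same route as the paper: the algebraic substitution $\D = \mu\delta - \Exp[\eta]$ from (\ref{walder}) for (i), and the comparison with $\bar{\mu}_{t}$ via Theorem \ref{ans3}(i) together with a Wald-identity bound on $\sum_{n\le \nu(t)}\eta_n$ for (iii). Part (ii) is where you diverge from the paper. The paper proves (ii) the same way it proves (iii), by comparing $\hat{\mu}_t$ with $\bar{\mu}_t$ and showing $(\sum_{n\le N(t)}\eta_n)/t \overset{p}{\to} 0$, so it invokes both Theorem \ref{ans3}(i) and the overshoot/Wald estimate. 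Your factorization $\hat{\mu}_t = \check{\mu}_t\cdot(t/\tau(t))$ instead deduces (ii) directly from part (i) and (\ref{taut}) with no further work on overshoots, which is a tidier and more self-contained argument — it makes (ii) a genuine corollary of (i) rather than a warm-up for (iii). One small imprecision in (iii): the claim that the remainder $(\sqrt{t}/\tau(t))\sum_{n\le N(t)}\eta_n$ ``has $L^1$-norm $\calo(\sqrt{t}/\delta)$'' is not literally right, since $t/\tau(t)$ is random and unbounded; what you actually have (and what the paper writes) is that $(1/\sqrt{t})\sum_{n\le N(t)}\eta_n$ has $L^1$-norm $\calo(\sqrt{t}/\delta)$, hence tends to $0$ in probability, and this is then multiplied by $t/\tau(t)\overset{p}{\to}1$ before applying Slutsky. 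The logic is sound, only the phrasing should separate the two steps.
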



\begin{proof}
(i) From the definition of $\check{\mu}_{t}$ and  (\ref{walder}) we have 
\begin{align} \label{o}
\begin{split}
|\check{\mu}_{t}-\mu|=  \Big|\D \frac{N(t) }{t}- \mu \Big| &=  \Big|\Bigl(\delta \mu -\Exp[\eta] \Bigr) \, \frac{N(t) }{t}- \mu \Big| \\
&\leq \mu \Big| \frac{ \delta N(t) }{t}-1\Big| + \Exp[\eta] \frac{N _{t}}{t}.
\end{split}
\end{align}
Taking expectations and applying (\ref{lr}) proves  (i).  

(ii) Due to Theorem  \ref{ans3}(i), it suffices to show that 
\begin{equation} \label{w}
|\bar{\mu}_{t}-\hat{\mu}_{t}| = \frac{t}{\tau(t)} \, \frac{1}{t} \sum_{n=1}^{N(t) } \eta_{n} \overset{p} \longrightarrow 0,
\end{equation}
or equivalently, due to (\ref{taut}), that $(\sum_{n=1}^{N(t) } \eta_{n})/t \overset{p} \rightarrow 0$  as $\delta, t \rightarrow \infty$  so that $\delta=o(t)$.
Indeed,  
\begin{align} \label{e}
\frac{1}{t} \Exp\Bigl[\sum_{n=1}^{N(t) } \eta_{n}\Bigr] &\leq \frac{1}{t} \Exp\Bigl[\sum_{n=1}^{\nu(t) } \eta_{n}\Bigr] 
= \frac{ \Exp[\nu(t) ] \Exp[\eta]  }{t} = \calo \Bigl(\frac{1}{\delta}\Bigr),
\end{align}
where the inequality is due to $\nu(t) =N(t) +1$, the first equality follows from Wald's first identity and the second one from 
(\ref{lr}) and the fact that $\Exp[\eta]=\calo(1)$ as $\D \rightarrow \infty$.

(iii) 
Due to Theorem \ref{ans3}(i),  it clearly suffices to show that $\sqrt{t} |\bar{\mu}_{t}-\hat{\mu}_{t}| \overset{p} \rightarrow 0$ or equivalently,
due to (\ref{taut}) and (\ref{w}), that $(\sum_{n=1}^{N(t) } \eta_{n})/\sqrt{t} \overset{p} \rightarrow 0$ as $\delta, t \rightarrow \infty$  so that $\sqrt{t}<< \delta  << t$,
which follows directly from (\ref{e}).
 \end{proof}

\noindent \underline{\textbf{Remarks:}} (1) Theorem \ref{theo:32} shows that a sufficiently \textit{low} sampling rate (or, equivalently, a sufficiently large threshold $\D$) is needed for $\hat{\mu}_{t}$ to  preserve the asymptotic distribution (\ref{ans2}) and for $\check{\mu}_{t}$ to be $\sqrt{t}$-consistent. This is quite intuitive, since small values of $\D$ may lead to frequent sampling, but they also lead to fast accumulation of large unobserved overshoots, thus they intensify the related performance loss. On the contrary, large thresholds guarantee that the relative size of the overshoots will be small, mitigating in this way the corresponding performance loss.     \\

\noindent (2) Theorem \ref{theo:32} and the discussion prior to it remain valid, with obvious modifications, when the sign of $\mu$ is unknown, as long as 
(\ref{modelo}) and (\ref{hatcjeck}) are replaced by
\begin{align*}
&\tau_{n}= \inf\{t \geq \tau_{n-1}: |S_{t}- S_{\tau_{n-1}}| \geq \D \}; \; \tau_{0}=0 \quad \text{and}  \\
&\bar{\mu}_{t}= \frac{1}{\tau(t)} \, \sum_{n=1}^{N(t)} (2z_{n}-1) \quad \text{and} \quad   \tilde{\mu}_{t}=\frac{1}{t} \,  \sum_{n=1}^{N(t)} (2 z_{n}-1),
\end{align*}
respectively, where $z_{n}:=1(S_{\tau_{n}}- S_{\tau_{n-1}} \geq \D)$.

\subsection{Efficient estimation via  overshoot correction} \label{sub33}
The estimators $\hat{\mu}_{t}$ and $\check{\mu}_{t}$ do not require knowledge of the distribution of $X$. However, when this distribution is known (up to the unknown parameter $\mu$), it should be possible to improve $\bar{\mu}_{t}$ and $\tilde{\mu}_{t}$ by approximating, instead of ignoring, the unobserved overshoots.
In order to achieve that, a first idea is to replace each $\eta_{n}$ in (\ref{bar2}) and (\ref{tilde2}) by its expectation, $\Exp[\eta]$. However, since the latter is typically an intractable quantity, we could use the \textit{limiting average overshoot} instead. Indeed, it is well known from classical renewal theory 
(see, e.g., \cite{gut}, p.  105) that if $X$ is non-lattice, then 
 \begin{equation} \label{asyover}
 \lim_{\D \rightarrow \infty} \Exp[\eta]= \rho(\mu):= \frac{\Exp[S_{\tau_{+}}^{2}]}{2 \, \Exp[S_{\tau_{+}}]},
 \end{equation}
 where $\tau_{+}:=\inf\{t \in \mathbb{N}: S_{t}>0\}$ is the first ascending ladder time (and, consequently, $S_{\tau_{+}}$ is the first ascending ladder height).
Note that we have expressed the limiting average overshoot as a function of $\mu$ in order to emphasize that it depends on the unknown parameter, $\mu$,
thus, it cannot be used directly to approximate the unobserved overshoots. However, we can obtain a working approximation of $\rho(\mu)$ if we replace 
$\mu$ with an estimator that does not require knowledge of the distribution of $X$, such as $\check{\mu}_{t}$ or $\hat{\mu}_{t}$. Doing so, we obtain 
\begin{align}
\begin{split}
\frac{1}{t} \, \sum_{n=1}^{N(t) }\Bigl[\D+ \rho(\check{\mu}_{t})\Bigr] & =\frac{N(t)  \, [\D + \rho(\check{\mu}_{t})]}{t} 
=:g(\check{\mu}_{t}) \\
\frac{1}{\tau(t)} \, \sum_{n=1}^{N(t) }\Bigl[\D+ \rho(\hat{\mu}_{t})\Bigr] &= \frac{N(t)  \, [\D + \rho(\hat{\mu}_{t})]}{\tau(t) } 
=:g(\hat{\mu}_{t}),
\end{split}
\end{align}
where $g(x):=x \,(1+ \rho(x)/\D)$, $x>0$. Note that the factor in the parenthesis reflects the overshoot correction that is achieved by the suggested approximation. In the next theorem we show that,  under certain conditions on the distribution of $X$, $g(\check{\mu}_{t})$, unlike $\check{\mu}_{t}$, can preserve the asymptotic distribution (\ref{ans2}), whereas $g(\hat{\mu}_{t})$ attains it for a wider range of sampling rates than $\hat{\mu}_{t}$.

\begin{theorem} \label{theo:33}
Suppose that $X$ is non-lattice and 
\begin{enumerate}
\item[(A1)] $\Exp[|X|^{r+1}]<\infty$ for some $r \geq  2$, 
\item[(A2)] $|\Exp[\eta]-\rho(\mu)| =\calo(\D^{-(r-1)})$ as $\D \rightarrow \infty$ for some $r \geq 2$,
\item[(A3)] $\mu \rightarrow \rho(\mu)$ is Lipschitz function.
\end{enumerate}
Then, (i) as  $\delta, t \rightarrow \infty$ so that $t^{1/4} << \delta <<\sqrt{t}$
$$ \frac{g(\check{\mu}_{t})-\mu}{\sigma/\sqrt{t}} \rightarrow \cN(0,1)$$
and (ii) as  $\delta, t \rightarrow \infty$ so that $t^{1/4} << \delta <<t$
$$\frac{g(\hat{\mu}_{t})-\mu}{\sigma/\sqrt{t}}  \rightarrow \cN(0,1).$$
\end{theorem}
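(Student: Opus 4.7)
The plan is to reduce both parts to Theorem \ref{ans3} by showing that $g(\check{\mu}_t)$ differs from the oracle estimator $\tilde{\mu}_t$ by $o_p(1/\sqrt{t})$ in the regime of (i), and that $g(\hat{\mu}_t)$ differs from $\bar{\mu}_t$ by $o_p(1/\sqrt{t})$ in the regime of (ii); Slutsky's theorem then closes the argument, since Theorem \ref{ans3} already delivers the CLT for $\tilde{\mu}_t$ when $\delta=o(\sqrt{t})$ and for $\bar{\mu}_t$ when $\delta=o(t)$, and the variance condition $\Var[\tau]=\calo(\delta^2)$ needed there holds by (\ref{neon}). Directly from the definitions,
\begin{align*}
g(\check{\mu}_t) - \tilde{\mu}_t &= \frac{N(t)\,\rho(\check{\mu}_t) - \sum_{n=1}^{N(t)}\eta_n}{t}, \\
g(\hat{\mu}_t) - \bar{\mu}_t &= \frac{t}{\tau(t)}\cdot\frac{N(t)\,\rho(\hat{\mu}_t) - \sum_{n=1}^{N(t)}\eta_n}{t},
\end{align*}
and since $t/\tau(t)\overset{p}\to 1$ by (\ref{taut}), it suffices in both cases to show that $R_t := [N(t)\,\rho(\widehat{\ast}_t) - \sum_{n=1}^{N(t)}\eta_n]/\sqrt{t}$ tends to $0$ in probability, with $\widehat{\ast}_t=\check\mu_t$ in (i) and $\widehat{\ast}_t=\hat\mu_t$ in (ii).

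I would then use the three-term decomposition
\[
N(t)\rho(\widehat{\ast}_t)-\sum_{n=1}^{N(t)}\eta_n \;=\; N(t)\bigl[\rho(\widehat{\ast}_t)-\rho(\mu)\bigr] \;+\; N(t)\bigl[\rho(\mu)-\Exp[\eta]\bigr] \;+\; \sum_{n=1}^{N(t)}\bigl(\Exp[\eta]-\eta_n\bigr),
\]
and control each term on the scale $\sqrt t$, starting from $N(t)/\sqrt{t}=\calo_p(\sqrt{t}/\delta)$ provided by (\ref{lr}). For the plug-in term, the Lipschitz property (A3) reduces matters to bounding $N(t)|\widehat{\ast}_t-\mu|/\sqrt{t}$. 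Theorem \ref{theo:32}(i) gives $|\check\mu_t-\mu|=\calo_p(\delta/t+1/\delta)$, while combining the identity $\hat\mu_t=\bar\mu_t-(t/\tau(t))\sum_n\eta_n/t$ with Theorem \ref{ans3}(i) and the Wald bound $\Exp[\sum_{n\le N(t)}\eta_n]=\calo(t/\delta)$ yields $|\hat\mu_t-\mu|=\calo_p(1/\sqrt{t}+1/\delta)$; in both cases the leading contribution to the product is $\sqrt{t}/\delta^{2}$, which is $o_p(1)$ precisely when $\delta\gg t^{1/4}$. For the bias term, hypothesis (A2) with $r\ge 2$ gives $|\rho(\mu)-\Exp[\eta]|=\calo(\delta^{-(r-1)})$, so multiplication by $N(t)/\sqrt{t}$ yields $\calo_p(\sqrt{t}/\delta^{r})=o_p(1)$ under the same condition $\delta\gg t^{1/4}$. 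For the fluctuation term, I would observe that $M_n:=\sum_{j=1}^{n}(\eta_j-\Exp[\eta])$ is a zero-mean random walk and, because $\Pro(\tau>0)=1$, one has $N(t)=\nu(t)-1$; Wald's second identity at the stopping time $\nu(t)$ then gives $\Exp[M_{\nu(t)}^{2}]=\Exp[\nu(t)]\,\Var[\eta]=\calo(t/\delta)$, and the single boundary term $\eta_{\nu(t)}-\Exp[\eta]$ is absorbed via the crude estimate $\Exp[\eta_{\nu(t)}^{2}]\le\Exp[\sum_{j\le\nu(t)}\eta_j^{2}]=\Exp[\nu(t)]\Exp[\eta^{2}]=\calo(t/\delta)$, so that $\Exp[M_{N(t)}^{2}]=\calo(t/\delta)$ and $M_{N(t)}/\sqrt{t}\to 0$ in $L^{2}$.

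Combining the three bounds yields $R_t\overset{p}\to 0$ whenever $\delta\gg t^{1/4}$, and the upper-rate restrictions $\delta=o(\sqrt{t})$ in (i) and $\delta=o(t)$ in (ii) are inherited exactly from Theorem \ref{ans3}. The main technical obstacle is the fluctuation term: since $N(t)$ is not itself a stopping time for the filtration of the overshoots, Wald's identity only applies at $\nu(t)$ and the boundary overshoot $\eta_{\nu(t)}$ must be handled separately; the required uniform bound $\Exp[\eta^{2}]=\calo(1)$ is the sole role played by (A1) and follows from Lorden's moment inequality $\Exp[\eta^{2}]\le c\,\Exp[|X|^{3}]/\mu$ together with $r\ge 2$. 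A secondary subtlety is the factor $t/\tau(t)$ in part (ii), which is innocuous once (\ref{taut}) is invoked through Slutsky's theorem.
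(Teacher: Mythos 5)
Your proof is correct and follows the same route as the paper's: the identical three-term decomposition of $N(t)\rho(\cdot)-\sum\eta_n$ into a fluctuation piece $\sum(\eta_n-\Exp[\eta])$, a bias piece $N(t)\,[\Exp[\eta]-\rho(\mu)]$, and a plug-in piece $N(t)\,[\rho(\cdot)-\rho(\mu)]$; the fluctuation piece is controlled by Wald plus Cauchy--Schwarz with the boundary correction at $\nu(t)$ exactly as in (\ref{w2}), the bias piece by (A2), and the plug-in piece by (A3). The one genuine departure is in part (ii): the paper never estimates $|\hat\mu_t-\mu|$ directly but instead bridges through $\check\mu_t$, bounding $N(t)|\rho(\hat\mu_t)-\rho(\check\mu_t)|/\sqrt{t}$ via $C(\D/\delta)(t/\tau(t))(\delta N(t)/t)^2(t-\tau(t))/(\delta\sqrt{t})$ and then invoking (\ref{2count}) and (\ref{age}), whereas you derive the in-probability rate $|\hat\mu_t-\mu|=\calo(1/\sqrt{t}+1/\delta)$ from $\hat\mu_t=\bar\mu_t-(t/\tau(t))\,t^{-1}\sum\eta_n$ and feed it straight into the Lipschitz bound. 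Your version is a bit cleaner, and it carries a side benefit: since you only need the plug-in piece to vanish in probability rather than in $L^1$, the first-moment control of $N(t)$ from (\ref{lr}) suffices, and you never invoke $\Exp[N_t^2]=\calo(t^2/\delta^2)$ from Lemma \ref{lem:330}(ii), which the paper's moment calculation in (\ref{w6}) does use. This also resolves the small discrepancy with the paper's own remark that (A1) serves two purposes (both $\Exp[\eta^2]=\calo(1)$ and the $L^2$-control of $N(t)$ through Theorems \ref{theo:1} and \ref{coro:1} at $r=2$): in your argument the second purpose drops out, so your claim that $\Exp[\eta^2]=\calo(1)$ is the ``sole'' role of (A1) is accurate for your route even though it is not how the paper uses the hypothesis.
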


Before we prove this theorem, let us first comment on its assumptions. (A2) describes how fast the  expected overshoot should converge 
to its limiting value as $\D  \rightarrow \infty$. We will show in Subsection \ref{subA2} that (A2) is implied by (A1)  when $X>0$ or by an exponentially decaying right tail of $X$.  (A3)  guarantees that if  $\check{\mu}_{t}$ (or $\hat{\mu}_{t}$) is a ``good'' estimator of $\mu$, then so will    $\rho(\check{\mu_{t}})$ (or $g(\hat{\mu}_{t})$) be for $\rho(\mu)$. Sufficient conditions for (A3) are presented in Subsection \ref{subA3}. Finally, (A1), which implies that $X$ must have at least a finite third moment, is needed for two reasons; it guarantees that  $\Exp[\eta^{2}]=\calo(1)$ as $\D \rightarrow \infty$ and at the same time it allows us to apply Theorems \ref{theo:1} and \ref{coro:1} for $r=2$ and obtain the asymptotic behavior of $\Exp[\nu^{2}(t)]$ and $\Exp[N^{2}(t)]$ as $\delta, t \rightarrow \infty$  so that $\delta=o(t)$. These two properties are summarized in the following lemma.

\begin{lemma} \label{lem:330}
(i)If $\Exp[(X^{+})^{3}]<\infty$, then $\Exp[\eta^{2}]=\calo(1)$ as $\D \rightarrow \infty$. 

(ii) If $\Exp[(X^{-})^{3}]<\infty$, then  as $\delta, t \rightarrow \infty$ so that $\delta=o(t)$
\begin{equation} \label{2count}
\; \Exp \Bigl[\Big|\frac{\delta N(t) }{t}-1\Big|^{2}\Bigr] =\calo\Bigl(\frac{\delta}{t}\Bigr)^{2} \quad \text{and} \quad
\Exp \Bigl[\Big|\frac{\delta \nu(t) }{t}-1\Big|^{2}\Bigr] =\calo\Bigl(\frac{\delta}{t}\Bigr)^{2}. 
\end{equation}
\end{lemma}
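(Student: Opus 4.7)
The plan is to prove the two parts of Lemma \ref{lem:330} independently, using the overshoot/renewal machinery already established earlier in the paper.

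\textbf{Part (i):} The key observation is that $\eta=S_{\tau}-\D$ is precisely the overshoot of the random walk $(S_t)_{t\in\mathbb{N}}$ past level $\D$ at its first passage time $\tau=\inf\{t:S_t\ge \D\}$. Since $(S_t)$ has i.i.d. increments $X$ with \emph{fixed} positive drift $\mu$, I would simply apply Lorden's overshoot bound (\ref{newlor}) with $r=2$ to this walk, yielding
\begin{equation*}
\Exp[\eta^{2}]=\Exp[(S_{\tau}-\D)^{2}]\;\le\;c_{2}\,\frac{\Exp[(X^{+})^{3}]}{\mu}.
\end{equation*}
Since $\mu$ is fixed and $\Exp[(X^{+})^{3}]\le \Exp[|X|^{3}]<\infty$ by hypothesis, the right-hand side is bounded uniformly in $\D$, proving $\Exp[\eta^{2}]=\calo(1)$ as $\D\rightarrow\infty$.

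\textbf{Part (ii):} I would invoke Corollary \ref{newcoro} (equivalently Theorem \ref{theo:1} with $r=2$) and Theorem \ref{coro:1} with $r=2$, so the real work is verifying the two moment conditions in the low-rate setting of Subsection \ref{sub32}. First, from the variance expansion (\ref{neon}) we already have $\Var[\tau]=\Theta(\delta)$, in particular $\Var[\tau]=\calo(\delta^{2})$; this gives the required control of $\Exp[|\tau-\delta|^{2}]$. Second, I need $\Exp[(\tau^{+})^{3}]=\Exp[\tau^{3}]=\calo(\delta^{3})$ (note $\tau>0$ a.s.), and this is exactly where the hypothesis $\Exp[(X^{-})^{3}]<\infty$ enters. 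Using the classical first-passage moment result of Gut (Theorem 4.2 in \cite{gut2}, already invoked for the spectrally negative Lévy example in Subsection \ref{sub22}), which asserts that
\begin{equation*}
\Exp[\tau^{r+1}]=(\D/\mu)^{r+1}\,(1+o(1)) \quad \text{as } \D\rightarrow\infty
\end{equation*}
whenever $\Exp[(X^{-})^{r+1}]<\infty$, the case $r=2$ yields $\Exp[\tau^{3}]=\Theta(\D^{3})=\Theta(\delta^{3})$ because $\delta=\Theta(\D)$.

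With these two conditions verified, Theorem \ref{coro:1} with $r=2$ (whose hypothesis $\Exp[\tau^{3}]=\calo(\delta^{3})$ implies condition $(*)$ of Theorem \ref{theo:1}) gives the bound on $\Exp[|\delta N(t)/t-1|^{2}]$ in the low-rate regime. To transfer the bound to $\nu(t)$, I would exploit $\nu(t)=N(t)+1$ together with the algebraic inequality (\ref{alge}):
\begin{equation*}
\Big|\tfrac{\delta\nu(t)}{t}-1\Big|^{2}\le 2\Big|\tfrac{\delta N(t)}{t}-1\Big|^{2}+2\bigl(\tfrac{\delta}{t}\bigr)^{2},
\end{equation*}
the last term being absorbed into the stated rate since $\delta=o(t)$.

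\textbf{Main obstacle:} Everything downstream is a clean application of the infrastructure (Lorden's bound, Wald identities, Theorem \ref{coro:1}), and the one nontrivial input is the moment estimate $\Exp[\tau^{3}]=\Theta(\delta^{3})$ for the first-passage time, which I would not reprove but cite from Gut. The translation of Lorden's inequality from the renewal walk $(\tau_{n})$ to the underlying walk $(S_{t})$ in part (i) is conceptually the only ``new'' move, but since it is Lorden's original setting, it needs no separate justification.
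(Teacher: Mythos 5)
Your proof is correct and follows essentially the same route as the paper's. For part (i), the paper disposes of the claim by citing Theorem 10.9 of \cite{gut}; you instead re-derive the bound by applying Lorden's overshoot inequality (the general form of (\ref{newlor})) with $r=2$ to the underlying walk $(S_t)$ with fixed drift $\mu$ and boundary $\D$, obtaining $\Exp[\eta^2]\le c_2\,\Exp[(X^+)^3]/\mu=\calo(1)$. This is a legitimate, more self-contained substitute---and indeed is essentially how the cited result is proved. For part (ii), you cite Theorem 4.2 of \cite{gut2} (the L\'evy-process version), whereas the paper uses Theorem 8.1 of \cite{gut} (the discrete-time first-passage moment expansion); since $(S_t)_{t\in\mathbb{N}}$ is a random walk, the latter is the cleaner reference, but both deliver $\Exp[\tau^3]=(\D/\mu)^3(1+o(1))=\calo(\delta^3)$, which together with $\Var[\tau]=\Theta(\delta)$ from (\ref{neon}) is exactly what the paper feeds into Theorems \ref{theo:1} and \ref{coro:1}. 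Your transfer from $N(t)$ to $\nu(t)$ via $\nu(t)=N(t)+1$ and (\ref{alge}) is a fine replacement for the paper's direct appeal to Theorem \ref{theo:1}: the extra $(\delta/t)^2$ it introduces is already of the stated order.
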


\begin{proof}
(i) is well-known from renewal theory (see, e.g., Theorem 10.9 in \cite{gut}).  Now, from Theorem 8.1 in \cite{gut} and the fact that $\delta=\Theta(\D)$ it follows that if $\Exp[(X^{-})^{3}]<\infty$, then $\Exp[\tau^{3}]=(\D/\mu)^{3}(1+o(1))=\calo(\delta^{3})$.  Moreover, from (\ref{neon}) we know that $\Var[\tau]=\Theta(\delta)$, therefore, (ii) follows from Theorems \ref{theo:1} and  \ref{coro:1}.
\end{proof}

\begin{proof}[Proof of Theorem \ref{theo:33}] 
(i) From Theorem \ref{ans3}(ii)  it is clear that 
it suffices to show that $\sqrt{t} \, |\tilde{\mu}_{t}- g(\check{\mu}_{t})| \overset{p} \rightarrow 0$  as $\delta, t \rightarrow \infty$  so that $t^{1/4}<< \delta << t$. 

In particular,  since 
\begin{align} \label{w1}
\sqrt{t} \, |\tilde{\mu}_{t}- g(\check{\mu}_{t})| &=  \frac{1}{\sqrt{t}} \, \Big|\sum_{n=1}^{N(t) } \Bigl(\eta_{n} - \rho(\check{\mu}_{t})\Bigr) \Big| \nonumber \\
&\leq  \frac{1}{\sqrt{t}} \Big| \sum_{n=1}^{N(t) } \Bigl(\eta_{n}- \Exp[\eta] \Bigr) \Big| +  \frac{N(t)  \, |\Exp[\eta] - \rho(\mu)|}{\sqrt{t}} +  \frac{N(t)  \,  |\rho(\check{\mu}_{t})- \rho(\mu)|}{\sqrt{t}},
\end{align} 
it  suffices to show that each term in (\ref{w1}) converges to 0 in probability when $t^{1/4}<< \delta << t$. We start with the first term and we observe that, since $\nu(t) =N(t) +1$, using the triangle inequality we can write
$$ \Big| \sum_{n=1}^{N(t) } \Bigl(\eta_{n}- \Exp[\eta] \Bigr) \Big|  \leq \Big| \sum_{n=1}^{\nu(t) } \Bigl(\eta_{n}- \Exp[\eta] \Bigr) \Big|  
+  |\eta_{\nu(t) }- \Exp[\eta]|.$$
Taking expectations and applying the Cauchy-Schwartz inequality to both terms of the right-hand side we obtain 
\begin{align*}
\Exp\Bigl[\Big| \sum_{n=1}^{\nu(t) } \Bigl(\eta_{n}- \Exp[\eta] \Bigr) \Big| \Bigr] &\leq 
\sqrt{ \Var\Bigl[ \sum_{n=1}^{\nu(t) } \Bigl(\eta_{n}- \Exp[\eta] \Bigr) \Bigr]}+ \sqrt{ \Exp\Bigl[ (\eta_{\nu(t) }- \Exp[\eta])^{2} \Bigr]} \\
&\leq \sqrt{ \Var\Bigl[ \sum_{n=1}^{\nu(t) } \Bigl(\eta_{n}- \Exp[\eta] \Bigr) \Bigr]} + \sqrt{ \Exp\Bigl[\sum_{n=1}^{\nu(t) } \Bigl(\eta_{n}- \Exp[\eta] \Bigr)^{2} \Bigr]} \\ 
&= \sqrt{\Exp[\nu(t) ]  \Var[\eta]} + \sqrt{\Exp[\nu(t) ]  \Var[\eta]} ,
\end{align*}
where the second inequality is trivial and the equality follows from an application of Wald's identities.  
But from (\ref{lr}) we have $\Exp[\nu(t)]=\calo(t/\delta)$ as $\delta, t \rightarrow \infty$ so that $\delta=o(t)$, whereas from Lemma \ref{lem:330}(i) we have 
$\Var[\eta]=\calo(1)$ as $\D \rightarrow \infty$. Therefore, 
\begin{align} \label{w2}
 \frac{1}{\sqrt{t}} \; \Exp\Bigl[\Big| \sum_{n=1}^{\nu(t) } \Bigl(\eta_{n}- \Exp[\eta] \Bigr) \Big| \Bigr]  
 =\calo \Bigl(\frac{1}{\sqrt{\delta}}\Bigr).
\end{align}

Regarding the second term in (\ref{w1}), from (A2) and the fact that $\delta =\Theta(\D)$ as $\D \rightarrow \infty$ it follows that $|\Exp[\eta] - \rho(\mu)|= \calo(\delta^{-(r-1)})$ as $\delta \rightarrow \infty$. Moreover, from (\ref{lr}) it follows that $\Exp[N(t)]=\calo(t/\delta)$ as $\delta, t \rightarrow \infty$ so that $\delta=o(t)$. Therefore,
\begin{equation} \label{w4}
\frac{\Exp[N(t) ] \, |\Exp[\eta] - \rho(\mu)|}{\sqrt{t}} = \frac{\calo(t/\delta) \, \calo(\delta^{-(r-1)})}{\sqrt{t}} =  \calo\Bigl( \frac{\sqrt{t}}{ \delta^{r}} \Bigr).
\end{equation}

Regarding the last term in (\ref{w1}), from the  assumption that $\mu \rightarrow \rho(\mu)$ is Lipschitz, there is a constant $C>0$ 
so that $$|\rho(\mu_{2})-\rho(\mu_{1})| \leq C|\mu_{2}-\mu_{1}|, \quad \forall \, \mu_{1}, \mu_{2}>0,$$
and, as a result, 
\begin{align*}
\frac{N(t)  \, |\rho(\check{\mu}_{t})- \rho(\mu)|}{\sqrt{t}}  &\leq C\, \frac{N(t)  \, |\check{\mu}_{t}- \mu|}{\sqrt{t}}  \\
&\leq   C \, \frac{N(t) }{\sqrt{t}} \; \Big|\frac{\delta N(t) }{t} - 1 \Big| + C \, \Exp[\eta] \,  \frac{N^{2}_{t}}{t^{3/2}},
\end{align*}
where the second inequality follows from (\ref{o}). Then, taking expectations and applying the Cauchy-Schwartz inequality in the first term we obtain 
\begin{align} \label{w6}
\Exp \Bigl[\frac{N(t)  \, |\rho(\check{\mu}_{t})- \rho(\mu)|}{\sqrt{t}}  \Bigr] & \leq C \, \sqrt{\frac{\Exp[N^{2}_{t}]}{t} \, \Exp \Big[\Big|\frac{\delta N(t) }{t} - 1 \Big|^{2}\Big]} +  C \, \frac{\Exp[N^{2}_{t}]}{t^{3/2}}  \nonumber \\
&=\calo\Bigl(\frac{1}{\sqrt{t}}\Bigr)+ \calo\Bigl(\frac{\sqrt{t}}{\delta^{2}}\Bigr) = \calo\Bigl(\frac{\sqrt{t}}{\delta^{2}}\Bigr).
\end{align}
The second  equality follows from (\ref{2count}), which implies that $\Exp[N^{2}_{t}]=\calo(t^{2}/\delta^{2})$.

From (\ref{w1}) - (\ref{w6}) we obtain 
\begin{equation} \label{w8}
\sqrt{t} \, |\tilde{\mu}_{t}- g(\check{\mu}_{t})| 
= \calo \Bigl(\frac{1}{\sqrt{\delta}}\Bigr)+  \calo\Bigl(\frac{\sqrt{t}}{\delta^{r}}\Bigr) + \calo\Bigl(\frac{\sqrt{t}}{\delta^{2}}\Bigr) ,
\end{equation}
which completes the proof, since the last term in the right hand side is the dominant one.

(ii) Due to Theorem \ref{ans3}(i), 
it   suffices to show that $\sqrt{t} |\bar{\mu}_{t}-g(\hat{\mu}_{t})| \overset{p} \rightarrow 0$ as $\delta, t \rightarrow \infty$  so that $ t^{1/4} << \delta <<t$. Since 
\begin{align} \label{w2222}
\sqrt{t} |\bar{\mu}_{t}-g(\hat{\mu}_{t})| &= \frac{\sqrt{t}}{\tau(t) } \, \Big|\sum_{n=1}^{N(t) } \Bigl(\eta_{n} - \rho(\hat{\mu}_{t})\Bigr) \Big|
= \frac{t}{\tau(t) } \, \frac{1}{\sqrt{t}} \Big|\sum_{n=1}^{N(t) } \Bigl(\eta_{n} - \rho(\hat{\mu}_{t})\Bigr) \Big| \nonumber \\
&\leq \frac{t}{\tau(t) } \,  \Bigl[ \frac{1}{\sqrt{t}} \Big| \sum_{n=1}^{N(t) } \Bigl(\eta_{n}- \rho(\check{\mu}_{t}) \Bigr) \Big| 
+ \frac{N(t)  \,  |\rho(\hat{\mu}_{t})- \rho(\check{\mu}_{t})|}{\sqrt{t}} \Bigr],
\end{align} 
from (\ref{taut}) and (\ref{w8}) it is clear that it suffices to show that the second term in the  parenthesis in  (\ref{w2222}) converges to 0 in probability  as $\delta, t \rightarrow \infty$ so that $\delta=o(t)$. Indeed, due to the assumption that $\mu \rightarrow \rho(\mu)$ is Lipschitz, we have  
\begin{align*}
\frac{N(t)  \,  |\rho(\hat{\mu}_{t})- \rho(\check{\mu}_{t})|}{\sqrt{t}} &\leq C\,  \frac{N(t) }{\sqrt{t}} \, \Big| \frac{\D \, N(t) }{\tau(t)}-\frac{\D \, N(t) }{t} \Big| \\
&= C \frac{\D}{\delta} \, \frac{t}{\tau(t) } \, \Bigl(\frac{\delta N(t) }{t}\Bigr)^{2} \; \frac{t-\tau(t)}{\delta \sqrt{t}}.
\end{align*}
From (\ref{2count}) and (\ref{age}) it is clear that the right-hand side converges to 0 in probability  as $\delta, t \rightarrow \infty$ so that $\delta=o(t)$,  which completes the proof. 
\end{proof}

\subsubsection{Sufficient conditions for (A2)} \label{subA2}
In order to find sufficient conditions for (A2),  we  appeal to Stone's  refinements of the renewal theorem \cite{sto1}, \cite{sto2}.
Thus, let  $U(\D)$ be the renewal function and $(\tau_{n,+})_{n \in \mathbb{N}}$ the ascending ladder times of the random walk $(S_{t})_{t \in \mathbb{N}}$, i.e., $U(\D):= \sum_{n=0}^{\infty} \Pro(S_{\tau_{n,+}} \leq \D)$ and
$$\tau_{n,+} := \inf\{t \geq \tau_{n-1,+}: S_{t} > S_{\tau_{n-1,+}}\} ; \; \tau_{0,+}:=0.$$
From \cite{sto1} we know that if $X$ is strongly non-lattice, i.e., 
$$\liminf_{|\theta| \rightarrow \infty}|1-\Exp[e^{i \theta X}]|>0$$
and (A1) holds,  i.e.,  $\Exp[|X|^{2+r}]<\infty$  for some $r  \geq 2$,
then 
\begin{equation} \label{qq2}
\Big|  U(\D)- \frac{\D}{\mu} - \frac{\rho(\mu)}{\mu} \Big|= o\Bigl(\frac{1}{\D^{r-1}} \Bigr).
\end{equation}
Moreover, from \cite{sto2} it follows that if $X$ is strongly non-lattice and it has an exponentially decaying right tail,
in the sense that $\Pro(X>x)=o(e^{-\theta_{1} x})$ as $x \rightarrow \infty$ for some $\theta_{1}>0$, then 
\begin{equation} \label{qq3}
 \Big|  U(\D)- \frac{\D}{\mu} - \frac{\rho(\mu)}{ \mu} \Big|= o(e^{-\theta_{2} \D} )  \quad \text{as} \quad  \D \rightarrow \infty
\end{equation} 
for some $\theta_{2}>0$. Based on these results, we have the following lemma that provides sufficient conditions for (A2).

\begin{lemma} \label{lem:33}
(A2) is satisfied when $X$ is strongly non-lattice and one of the following holds
\begin{enumerate}
\item[(i)]  $\Pro(X>0)=1$ and $\Exp[|X|^{2+r}]<\infty$  for some $r  \geq 2$,
\item[(ii)] $\Pro(X>x)=o(e^{-\theta_{1} x})$ as $x \rightarrow \infty$ for some $\theta_{1}>0$.
\end{enumerate}
\end{lemma}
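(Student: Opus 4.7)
The strategy is to express $\Exp[\eta]$ via $U(\D)$ using Wald's identity, then apply the refinements (\ref{qq2}) or (\ref{qq3}) to transfer the rate to (A2). The two cases differ in which refinement is invoked and in what must be verified as input to it.

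For case (i), the positivity of $X$ trivialises the ladder structure ($\tau_{n,+}=n$), so $U(\D)=\sum_{n\geq 0}\Pro(S_n\leq\D)$ coincides with the ordinary renewal function of $(X_t)_{t \in \mathbb{N}}$. The first-passage time $\tau=\inf\{t: S_t\geq\D\}$ is a stopping time with $\Exp[\tau]=U(\D)$, and Wald's identity gives $\Exp[S_\tau]=\mu\,U(\D)$, hence
\begin{equation*}
\Exp[\eta] \;=\; \mu\,U(\D) - \D.
\end{equation*}
The hypotheses of (\ref{qq2})---strong non-latticeness and $\Exp[|X|^{2+r}]<\infty$---are exactly those of (i), and multiplying (\ref{qq2}) by $\mu$ yields
\begin{equation*}
|\Exp[\eta]-\rho(\mu)| \;=\; \mu\,\bigl|U(\D)-\D/\mu-\rho(\mu)/\mu\bigr| \;=\; o(\D^{-(r-1)}),
\end{equation*}
which is (A2) for this $r$.

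For case (ii), the same Wald argument gives $\Exp[\eta]=\mu\,\Exp[\tau]-\D$. To apply (\ref{qq3}) in the form $\Exp[\tau]=\D/\mu+\rho(\mu)/\mu+o(e^{-\theta_2\D})$, I would need to verify that its hypotheses transfer from $X$ to the ladder-height distribution, which governs the relevant overshoot/renewal structure. Strong non-latticeness of $S_{\tau_+}$ follows from a Wiener--Hopf factorisation, while the exponential right tail of $S_{\tau_+}$ is obtained in two steps: first, $\Pro(X>x)=o(e^{-\theta_1 x})$ gives $\Exp[e^{\theta X}]<\infty$ on $\theta\in(0,\theta_1)$; then a standard random-walk fluctuation argument (bounding $S_{\tau_+}$ by the running maximum up to $\tau_+$, or via the Wiener--Hopf factorisation directly) yields $\Exp[e^{\theta'' S_{\tau_+}}]<\infty$ for some $\theta''>0$, whence the exponential tail by Markov's inequality. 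Once (\ref{qq3}) applies, $|\Exp[\eta]-\rho(\mu)|=o(e^{-\theta_2\D})$, which is $O(\D^{-(r-1)})$ for every $r\geq 2$, establishing (A2).

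The main technical obstacle is the inheritance argument in case (ii)---namely, that the exponential right tail (and the strong non-latticeness) of $X$ is inherited by $S_{\tau_+}$, so that (\ref{qq3}) is applicable. This is classical but requires a short, careful random-walk fluctuation argument. Case (i) is essentially a one-line reduction to (\ref{qq2}), since the ladder heights coincide with the steps $X$.
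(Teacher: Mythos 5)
Case (i) is essentially the paper's proof: Wald's identity yields $\Exp[\eta]=\mu\,\Exp[\tau]-\D$, positivity of $X$ gives $\Exp[\tau]=U(\D)$, and (\ref{qq2}) closes the argument. Case (ii), however, takes a genuinely different route from the paper. The paper \emph{abandons} the Wald reduction in case (ii) precisely because $\Exp[\tau]\neq U(\D)$ when $\Pro(X>0)<1$; it instead uses the double-integral representation of $\Exp[\eta]$ in terms of $U$ and the ladder-height law and follows Chang's estimates to extract the exponential rate. You stay with the Wald reduction, which is the cleaner path in principle, but you leave its crucial step unstated: (\ref{qq3}) controls the ladder-height renewal function $U(\D)$, whereas you need an expansion of $\Exp[\tau]$. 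The bridge is one more Wald identity at the ladder level: since $S_{\tau}$ is the first ladder height above $\D$,
\begin{equation*}
\Exp[\eta]=\Exp[S_{\tau_{+}}]\,U(\D)-\D, \qquad \text{equivalently} \qquad \Exp[\tau]=\Exp[\tau_{+}]\,U(\D),
\end{equation*}
and substituting Stone's expansion of $U$ (whose leading coefficient is $1/\Exp[S_{\tau_{+}}]$, not $1/\mu$) and using $\mu\,\Exp[\tau_{+}]=\Exp[S_{\tau_{+}}]$ gives exactly the form $\Exp[\tau]=\D/\mu+\rho(\mu)/\mu+o(e^{-\theta_2\D})$ you wish to invoke. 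Your write-up jumps from ``verify the hypotheses on the ladder-height law'' to ``once (\ref{qq3}) applies, $|\Exp[\eta]-\rho(\mu)|=o(e^{-\theta_2\D})$,'' and this conversion is the actual substance of the step, not a formality. Once spelled out, your route is in fact shorter than the paper's (it avoids Chang's double-integral computation entirely). The verification you do include---that strong non-latticeness and the exponential tail pass from $X$ to $S_{\tau_{+}}$---is a useful addition: the paper states (\ref{qq2})-(\ref{qq3}) with hypotheses on $X$ alone, even though Stone's theorem is applied to the renewal process generated by the ladder heights, so that inheritance is implicitly being used there as well.
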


\begin{proof}[Proof of Lemma \ref{lem:33}]
We observe that 
\begin{align} \label{qq1}
|\Exp[\eta]-\rho(\mu)| &= |\Exp[S_{\tau}-\D] -\rho(\mu)| =  \mu \, \Big|  \Exp[\tau]- \frac{\D}{\mu} - \frac{\rho(\mu)}{ \mu} \Big|,
\end{align} 
where the second equality follows from an application of Wald's identity. When $\Pro(X>0)=1$, then  $\Exp[\tau]= U(\D)$ and the Lemma 
follows from (\ref{qq2}). When $X$ has an exponentially decaying right tail, using (\ref{qq3}) and the following representation of the expected overshoot  
$$\Exp[\eta] = \int_{0}^{\infty} \int_{0}^{\infty} \Bigl(1-\Pro(S_{\tau+} >\D+y-x) \Bigr) \, U(dx) \, dy,$$ 
we can show, working similarly to Chang \cite{cha}, p. 723, that $|\Exp[\eta]-\rho(\mu)| \rightarrow 0$ \textit{exponentially fast} as $\D \rightarrow \infty$, which of course implies (A2).
\end{proof}

\subsubsection{Sufficient conditions for (A3)} \label{subA3}
 When $\Pro(X>0)=1$, from (\ref{asyover}) it follows that
$$\rho(\mu)=  \frac{\Exp[X^{2}]}{2 \Exp[X]}=\frac{\mu}{2} + \frac{\sigma^2}{2\mu}.$$
Therefore, for (A3) to hold, $\sigma^2/\mu$ has to be Lipschitz as a function of $\mu$. Clearly, this is not the case when $\sigma$ is independent of $\mu$, unless $\mu$ is restricted on a compact interval. However,  when $\sigma^{2}=c \mu^{2}$ for some $c>0$, then 
$\rho(\mu)= [(1+c)/2] \, \mu$ and (A3) is satisfied. For example, if $X$ follows the Gamma distribution with shape parameter $k>0$ and rate parameter $\lambda>0$, 
then $\sigma^{2}=\mu^{2}/k$ and $\rho(\mu)$ is proportional to $\mu$ for any given  $k$.  

When $\Pro(X>0)<1$, $\rho(\mu)$ does not typically admit a convenient closed-form expression in terms of $\mu$. An exception is the Gaussian distribution, in which case (\ref{asyover}) takes the following form
$$\rho(\mu)= \frac{\mu^{2}+\sigma^{2}}{2 \mu} 
-\sigma \; \sum_{n=1}^{\infty} \frac{\phi(b_{n})  -   b_{n} \, \Phi(-b_{n})}{\sqrt{n}} \, , 
\quad b_{n}:= \frac{\mu \, \sqrt{n}}{\sigma},$$
(see \cite{wood}, p. 34), where $\Phi(\cdot)$ and $\phi(\cdot)$ are the c.d.f. and p.d.f. respectively of the standard normal distribution. In this case as well, $\sigma^{2}=c \mu^{2}$ for some $c>0$ implies  $b_{n}=\sqrt{n/c}$ and, consequently, $\rho(\mu)=w_{c} \, \mu$, where 
$$w_{c}:= \frac{1+c}{2}-  \; \sum_{n=1}^{\infty} \frac{\phi(\sqrt{n/c})  -    \sqrt{n/c}\, \Phi(-\sqrt{n/c})}{\sqrt{n/c}}.$$


\subsection{Summary and simulation experiments} \label{sub34}
In Table \ref{tab:1}, we present the estimators of $\mu$ that we have considered so far in the context of the sampling scheme (\ref{modelo}).
For each estimator, we report whether it requires knowledge of the overshoots $(\eta_{n})$ and/or the distribution of $X$ and we 
present the sampling rates for which its performance is optimized. 

In Figure \ref{fig:1} we  plot the relative efficiency of each estimator as a function of the sampling period, $\delta$, for a fixed horizon of observations, $t=300$, when  $X\sim \cN(\mu, c\mu^{2})$ and $c=\mu=4$. More specifically, we define the relative efficiency  of $\bar{\mu}_{t}$ (similarly for the other estimators) as the ratio of its mean square error over the mean square error of $S_{t}/t$,
$$RE(\bar{\mu}_{t}):= \frac{\Exp[(\bar{\mu}_{t}-\mu)^{2}]}{\Exp[(\mu_{t}-\mu)^{2}]}=\frac{\Exp[(\bar{\mu}_{t}-\mu)^{2}]}{\sigma^{2}/t}$$
and we  compute it using simulation experiments.  


The findings depicted in this figure verify our asymptotic results. First of all, we observe that $\bar{\mu}_{t}$, $\hat{\mu}_{t}$ and $g(\hat{\mu}_{t})$ are more efficient than $\tilde{\mu}_{t}$, $\check{\mu}_{t}$ and $g(\check{\mu}_{t})$, respectively,   for \textit{any} $\delta$ and especially for large values of $\delta$. Thus, it is always preferable to use $\tau(t)$ in the denominator, instead of $t$, especially when one is interested in large sampling periods.

Moreover, we see that for $\bar{\mu}_{t}$ and $\tilde{\mu}_{t}$, the smaller the sampling period $\delta$, the better the performance; for 
$\hat{\mu}_{t}$ and $g(\hat{\mu}_{t})$,  performance improves (and eventually remains flat) as $\delta$ increases;
for $\check{\mu}_{t}$ and $g(\check{\mu}_{t})$, performance  is optimized when $\delta$ is in a particular range and deteriorates for both very small and very large values of  $\delta$. Therefore,  when the random walk $(S_{t})$ is fully observed at the sampling times $(\tau_{n})$, it is preferable to have a high sampling rate, but when the overshoots $(\eta_{n})$ are unobserved and $\tau(t)$ (resp. $t$) is used in the denominator, the sampling rate should be low (resp. moderate). 
 
Finally, we  observe  that $g(\check{\mu}_{t})$ is more efficient than $\check{\mu}_{t}$ \textit{uniformly over $\delta$}, whereas its performance attains its optimum over a much wider range for $\delta$. On the other hand, $g(\hat{\mu}_{t})$ is more efficient than $\hat{\mu}_{t}$  only for small $\delta$. For very large values of $\delta$, $\hat{\mu}_{t}$ turns out to be more efficient even than $\bar{\mu}_{t}$! Therefore,  when using  $\tau(t)$ (resp. $t$) in the denominator, approximating the overshoots is beneficial only for high sampling rates (resp. for any sampling rate and especially for low sampling rates).



\begin{figure}[!h]
  \centering
  \includegraphics [width=0.9\linewidth, height=0.6\linewidth]{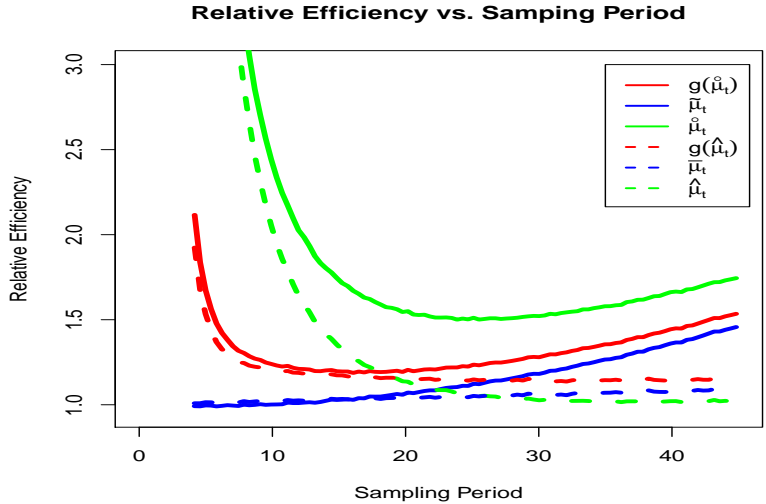}
\label{fig:1}
\caption{Relative efficiency against sampling period for each of the estimators in Table \ref{tab:1} in a curved Gaussian distribution. The horizon of observations is $t=300$, the true mean is $\mu=4$ and the variance $\sigma^{2}=4 \mu^{2}$. The computations are based on simulation experiments.}
\end{figure}

\begin{table}[!h]
	\centering
			\begin{tabular}{|c|c|c|c|c|} \hline
	   Estimator          & Formula & Distribution & Overshoots   & Optimal Rate            \\ \hline  \hline
	 $\bar{\mu}_{t}$  &  $S_{\tau(t)}/ \tau(t)$ & no & yes   &  $\delta <<t$   \\ 
	 \hline 
	 $\tilde{\mu}_{t}$    &  $S_{\tau(t)}/t$ & no &  yes  &  $\delta <<\sqrt{t}$  \\ 
	 \hline
	 $\hat{\mu}_{t}$      &  $N(t)  \D / \tau(t)$ & no & no &  $\sqrt{t} << \delta << t$   \\ 
	 \hline
	 $\check{\mu}_{t}$    &  $ N(t)  \D / t$ & no  & no  &   $\delta \sim \sqrt{t}$            \\ 
	 \hline   
	 $g(\hat{\mu}_{t})$   &  $g(\hat{\mu}_{t})   $ & yes & no  & $t^{1/4} << \delta << t$  \\ 
	 \hline 
	 $g(\check{\mu}_{t})$   &  $g(\check{\mu}_{t})$  & yes & no   &  $ t^{1/4} << \delta << \sqrt{t}$   \\ 
	 \hline 
	 \end{tabular}
	 	\caption{Estimators of $\mu$ based on the sampling times (\ref{modelo}) (the first two apply to more general sampling schemes). 
	 	For each estimator, we report whether it requires knowledge of 	the random walk distribution,	the overshoots $(\eta_{n})$, and we specify the range of of sampling rates for which its performance is optimized.}
	 		\label{tab:1}
	\end{table}


\subsection{Estimating the standard deviation} \label{sub35}
In order to attach an (asymptotic) standard error to the estimators we considered in this section when $\sigma$ is unknown, 
we need  a consistent estimator of $\sigma$, which is not possible to obtain using only the sampling times $(\tau_{n})$. 
If, however, in addition to $(\tau_{n})$ we also observe the following stopping times:
$$
\theta_{n}:= \inf\{t \geq \theta_{n-1}: Z_{t}- Z_{\theta_{n-1}} \geq \G \}, \quad n \in \mathbb{N},
$$
where $Z_{t}:= (X_{1})^{2}+ \ldots+  (X_{t})^{2}$ and  $\G>0$, then we can estimate $\sigma$ at some arbitrary time $t\geq \theta_{1}$ with 
$$\hat{\sigma}_{t}:= \sqrt{\frac{\G M_{t}}{\theta(t)} - (\hat{\mu}_{t})^{2}},$$
where $M(t):=\max\{n: \theta_{n} \leq t\}$ and $\theta(t):=\theta_{M(t)}$. The following theorem shows that $\hat{\sigma}_{t}$ is a consistent estimator of $\sigma$ under low-rate sampling and, consequently, it implies that  Theorems \ref{theo:32}(iii) and \ref{theo:33} remain valid if we replace $\sigma$ by $\hat{\sigma}_{t}$.

\begin{theorem} \label{last}
If $\Exp[|X|^{3}]<\infty$, then  $\hat{\sigma}_{t} \overset{p} \rightarrow \sigma$  as $\D, \G, t \rightarrow \infty$ so that $\D, \G=o(t)$.
\end{theorem}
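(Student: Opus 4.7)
The strategy is to exploit the algebraic decomposition
$$
\hat{\sigma}_t^{2} \,=\, \frac{\G\, M(t)}{\theta(t)} \,-\, \hat{\mu}_t^{2},
$$
and to argue that each summand on the right converges in probability to its natural limit. The second summand converges to $\mu^{2}$, the first to $\Exp[X^{2}] = \sigma^{2}+\mu^{2}$, and since $\sigma^{2}>0$ the square-root is continuous at the difference, so the claim will follow from the continuous mapping theorem.

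The convergence $\hat{\mu}_t \overset{p}\to \mu$ (and hence $\hat{\mu}_t^{2} \overset{p}\to \mu^{2}$) is immediate from Theorem \ref{theo:32}(ii), since the hypothesis $\D,t \to \infty$ with $\D=o(t)$ is assumed. The core of the argument is therefore to establish $\G M(t)/\theta(t) \overset{p}\to \Exp[X^{2}]$ as $\G,t \to \infty$ with $\G=o(t)$. The key observation is that $(Z_t)_{t \in \mathbb{N}}$ is itself a random walk, with non-negative i.i.d.\ increments $(X_i^{2})$ and positive drift $\Exp[X^{2}]$. The times $(\theta_n)$ are exactly the first-hitting times of $(Z_t)$ over the level $\G$, and $M(t)$ and $\theta(t)$ play for $(Z_t)$ the same roles as $N(t)$ and $\tau(t)$ play for $(S_t)$ in Subsection \ref{sub32}. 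In particular, $\G M(t)/\theta(t)$ is the exact analogue of $\hat{\mu}_t$ for the walk $(Z_t)$ with threshold $\G$ and drift $\Exp[X^{2}]$. Applying Theorem \ref{theo:32}(ii) to $(Z_t)$ therefore gives the desired convergence, and combining with the first piece we obtain
$$
\hat{\sigma}_t \,=\, \sqrt{\frac{\G M(t)}{\theta(t)} - \hat{\mu}_t^{2}} \;\overset{p}{\longrightarrow}\; \sqrt{\Exp[X^{2}] - \mu^{2}} \,=\, \sigma.
$$

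The step I expect to require the most care is verifying that the moment condition $\Exp[|X|^{3}]<\infty$ suffices to invoke Theorem \ref{theo:32}(ii) for $(Z_t)$. This hypothesis translates to $\Exp[(X^{2})^{3/2}]<\infty$ for the increments of $(Z_t)$, and the proof of Theorem \ref{theo:32}(ii) rests on two ingredients: Lorden's age bound (\ref{age}) applied to $(\theta_n)$, which requires $\Exp[\theta^{2}] = \calo(\G^{2})$, and the overshoot control $\Exp[\tilde{\eta}_n]/\theta(t) \overset{p}\to 0$, where $\tilde{\eta}_n := Z_{\theta_n} - Z_{\theta_{n-1}} - \G$. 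Both are standard first-passage estimates for a random walk with positive, non-negative increments (Theorem 8.1 in \cite{gut} and the elementary renewal theorem, giving $\Exp[\tilde{\eta}_n]=o(\G)$); checking that they remain available at this weaker level of integrability, since only convergence in probability (not $\cL^{1}$) is needed, is a bookkeeping exercise that completes the proof.
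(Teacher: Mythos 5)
Your proposal is correct and follows essentially the same route as the paper: reduce to $\G M(t)/\theta(t) \overset{p}\to \Exp[X^{2}]$ by observing that $(Z_t)$ is a random walk with non-negative i.i.d.\ increments $(X_i^{2})$ to which the argument of Theorem \ref{theo:32}(ii) applies, then combine with $\hat{\mu}_t\overset{p}\to\mu$ and the continuous mapping theorem. You also flag, correctly, the one place that deserves care — checking that $\Exp[|X|^3]<\infty$, i.e.\ $\Exp[(X^2)^{3/2}]<\infty$ for the increments of $(Z_t)$, is enough to run the first-passage estimates — which the paper likewise leaves as an assertion ("can be done in exactly the same way").
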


\begin{proof}
Due to Theorem \ref{theo:32}(ii), it suffices to prove  that 
$$\frac{\G M(t)}{\theta(t)}  \overset{p} \longrightarrow \Exp[X^{2}]=\sigma^{2}+\mu^{2}$$
 as $\G, t \rightarrow \infty$ so that $\G=o(t)$. Under a third moment assumption, this can be done in exactly the same way as we showed that $\hat{\mu}_{t} \overset{p} \rightarrow \mu$ when $\D, t \rightarrow \infty$ so that $\D=o(t)$ in Theorem \ref{theo:32}(ii). 
\end{proof}

\section{Decentralized parameter estimation} \label{sec4}
We now apply the results of Section \ref{sec3} to  the decentralized parameter estimation that we described in Subsection \ref{sub13} of the Introduction. Thus, we consider $K$ sensors, dispersed at various locations, so that the observations at each sensor $k$, $(X_{t}^{k})_{t \in \mathbb{N}}$, are i.i.d. with unknown mean $\mu$ and standard deviation $\sigma_{k}$,  $k=1, \ldots, K$.  All sensors communicate with a fusion center, whose goal is to estimate $\mu$. When $\sigma_{1}, \ldots, \sigma_{k}$ are known and each sensor transmits its exact observation $X_{t}^{k}$ at every time $t$, the fusion center can use the best linear estimator of $\mu$ at any time $t$, which is given by (\ref{opti}), and whose asymptotic distribution as  $t \rightarrow \infty$ is given by (\ref{CLT}), under the assumption of independence across sensors. However, due to bandwidth and energy constraints, the sensors are not, typically, able to transmit their complete observations to the fusion center. Instead, they should ideally transmit, infrequently, low-bit messages. 
Our goal in this section is to show that it is possible to construct estimators that preserve asymptotic distribution (\ref{CLT}), even under such severe communication constraints. Indeed, assuming for simplicity that $\mu>0$, each sensor $k$ needs to communicate with  the fusion center at the following sequence of stopping times
\begin{equation} \label{times}
\tau_{n}^{k}:= \inf\{t \geq \tau_{n-1}: S^{k}_{t}- S^{k}_{\tau^{k}_{n-1}} \geq \D^{k} \}, \quad n \in \mathbb{N};  \quad \tau_{0}^{k}:=0 ,
\end{equation} 
where $\D^{k}>0$ is a fixed threshold. Then, two natural estimators of $\mu$ at some time $t$ are given by 
\begin{align}
\check{\mu}_{t} &:=\sum_{k=1}^{K} w_{k} \, \check{\mu}_{t}^{k}, \quad \check{\mu}_{t}^{k}:= \Di N_{t}^{k}/ t  \label{old2} \\
\hat{\mu}_{t} &:=\sum_{k=1}^{K} w_{k} \, \hat{\mu}_{t}^{k}, \quad \hat{\mu}_{t}^{k}:= \Di N_{t}^{k}/\tau^{k}(t), \label{new2} 
\end{align}
where $N_{t}^{k}=\max\{ n: \tau_{n} <t\}$ is the number of messages transmitted by sensor $k$ up to time $t$ and $\tau^{k}(t)=\tau_{N^{k}(t)}$ the last communication time from sensor $k$. If, additionally, the form of the limiting average overshoot 
$$\rho_{k}(\mu):=\lim_{\Di \rightarrow \infty} \Exp[S_{\tau^{k}}-\Di] ; \quad  \tau^{k}:=\tau_{1}^{k} $$ 
is known for each $k$, two alternative estimators of $\mu$ are $\sum_{k=1}^{K} w_{k} \, g_{k}(\check{\mu}_{t}^{k})$ and $\sum_{k=1}^{K} w_{k} \, g_{k}(\hat{\mu}_{t}^{k})$, where  $g_{k}(x):= x(1+ \rho_{k}(x)/\Di)$. The following theorem describes the asymptotic behavior of these estimators.
Its proof is a direct consequence of the results presented in Section \ref{sec3} and the assumption of independence across sensors.  In order to state it, we set 
$\uD:= \min_{1 \leq k \leq K}\D^{k}$ and $\oD:=\max_{1 \leq k \leq K} \Di$.

\begin{theorem} \label{theo:3222}
\begin{enumerate}
\item[(i)] 
If $\uD,t \rightarrow \infty$ so that $\sqrt{t} << \uD$ and $\oD << t$, then
$$
\frac{\hat{\mu}_{t}-\mu}{\sqrt{K} \sigma /\sqrt{t}} \rightarrow \cN(0,1).
$$
\item[(ii)] Suppose that the assumptions of Theorem \ref{theo:33} are satisfied by each $X_{1}^{k}$, $\rho_{k}(\cdot)$. 
If $\uD,t \rightarrow \infty$ so that $\sqrt[4]{t} << \uD$ and $\oD<< t$, then
$$
\frac{\sum_{k=1}^{K} w_{k} \, g_{k}(\hat{\mu}_{t}^{k})-\mu}{\sqrt{K} \sigma /\sqrt{t}} \rightarrow \cN(0,1).
$$
If, additionally, $\oD << \sqrt{t}$, then
$$
\frac{\sum_{k=1}^{K} w_{k} \, g_{k}(\check{\mu}_{t}^{k})-\mu}{\sqrt{K} \sigma /\sqrt{t}} \rightarrow \cN(0,1).
$$\end{enumerate}

\end{theorem}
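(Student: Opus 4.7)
The plan is to reduce Theorem \ref{theo:3222} to the single-sensor CLTs of Section \ref{sec3} and combine them using the independence of observations across sensors. The conditions on $\uD = \min_k \Di$ and $\oD = \max_k \Di$ are tailored so that the relevant single-sensor hypotheses hold uniformly in $k$: in (i), $\sqrt{t} << \uD \le \Di \le \oD << t$ forces $\sqrt{t} << \Di << t$ for every $k$, and analogously for (ii).

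First I would apply the appropriate one-dimensional CLT for each $k$. For (i), Theorem \ref{theo:32}(iii) yields
\begin{equation*}
\frac{\hat{\mu}_t^k - \mu}{\sigma_k/\sqrt{t}} \overset{\cD}{\longrightarrow} \cN(0,1), \qquad k = 1, \ldots, K.
\end{equation*}
For (ii), Theorem \ref{theo:33}(ii) gives the analogous statement for $g_k(\hat{\mu}_t^k)$ under $t^{1/4} << \uD$ and $\oD << t$, while Theorem \ref{theo:33}(i) does the same for $g_k(\check{\mu}_t^k)$ under the additional requirement $\oD << \sqrt{t}$. In either subcase the assumptions of Theorem \ref{theo:33} are postulated to hold sensor-by-sensor, so these invocations are legal.

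Next I would upgrade these marginal convergences to joint convergence. Each single-sensor estimator is a measurable function of the observations $(X_t^k)_{t \in \mathbb{N}}$ of sensor $k$ alone, and these observation sequences are independent across $k$. Consequently, the characteristic function of the vector $\bigl((\hat{\mu}_t^k - \mu)\sqrt{t}/\sigma_k\bigr)_{k=1}^K$ factors into the product of its $K$ marginal characteristic functions, each of which converges pointwise to $\exp(-\xi_k^2/2)$, so the product converges to $\exp(-\|\xi\|^2/2)$. By L\'evy's continuity theorem the joint limit is a vector of $K$ independent standard normals. Finally, the continuous-mapping theorem applied to the linear functional $(x_1, \ldots, x_K) \mapsto \sum_k w_k \sigma_k x_k$ delivers the stated asymptotic normality of $\hat{\mu}_t - \mu = \sum_k w_k(\hat{\mu}_t^k - \mu)$ with the scaling of \eqref{CLT}; the identical scheme, applied to $(g_k(\hat{\mu}_t^k))$ or $(g_k(\check{\mu}_t^k))$ in place of $(\hat{\mu}_t^k)$, handles both parts of (ii).

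There is no serious obstacle here: the theorem is essentially a packaging of the Section \ref{sec3} results. The only bookkeeping that deserves explicit care is checking that the intervals for $\uD$ and $\oD$ in the hypothesis exactly match the ranges on $\Di$ required by the three one-sensor CLTs in Theorems \ref{theo:32} and \ref{theo:33}; the promotion from marginal to joint convergence via independence can equivalently be done through the Cram\'er--Wold device, but the characteristic-function factorization above is perhaps the most transparent route.
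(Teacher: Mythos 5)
Your proposal is correct and follows essentially the same route as the paper, which gives no detailed proof of Theorem \ref{theo:3222} but simply states that it is ``a direct consequence of the results presented in Section \ref{sec3} and the assumption of independence across sensors.'' You make that reduction explicit: verify that the hypotheses on $\uD$ and $\oD$ force each $\Di$ into the window required by the one-sensor CLTs (Theorem \ref{theo:32}(iii) for part (i), Theorem \ref{theo:33}(i)--(ii) for part (ii)), upgrade the $K$ marginal limits to a joint limit via factorization of the characteristic function over independent sensors, and finish with the continuous-mapping theorem applied to the linear combination — which is exactly the intended bookkeeping.
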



When $\sigma_{k}=\sigma$ for every $1 \leq k \leq K$, then $w_{k}=1/K$ and Theorem \ref{theo:3222} will remain valid if we replace $\sigma$ by 
a consistent estimator of it. Following Subsection \ref{sub35}, we can find such an estimator based on infrequent transmissions of 1-bit messages from all sensors. To this end, each sensor $k$ needs to communicate with the fusion center, in addition to (\ref{times}), at the following sequence of stopping times: 
$$
\theta_{n}^{k}:= \inf\{t \geq \theta_{n-1}^{k}: Z^{k}_{t}- Z^{k}_{\theta^{k}_{n-1}} \geq \G^{k} \}, \quad n \in \mathbb{N},
$$
where $Z_{t}^{k}:= (X_{1}^{k})^{2}+ \ldots+  (X_{t}^{k})^{2}$ and $\G^{k}>0$.  Then, 
$$\hat{\sigma}_{t}:= \sqrt{ \sum_{k=1}^{K} w_{k} \, \frac{\G^{k} M_{t}^{k}}{\theta^{k}(t)} - (\hat{\mu}_{t})^{2}}$$
turns out to be a consistent estimator of $\sigma$,  where $M^{k}(t):=\max\{n: \theta_{n}^{k} \leq t\}$ and $\theta^{k}(t):=\theta^{k}_{M^{k}(t)}$. This is the content of the following theorem, for which we set 
$\uG:= \min_{1 \leq k \leq K} \Gi$ and $\oG:=\max_{1 \leq k \leq K} \Gi$.

\begin{theorem}
If $\Exp[|X^{k}_{1}|^{3}]<\infty$ and $\sigma_{k}^{2}=\sigma^{2}$ for every $1 \leq k \leq K$, then 
$\hat{\sigma}_{t} \overset{p}  \rightarrow \sigma$ as $\uG,\uD, t \rightarrow \infty$ so that $\oG, \oD=o(t)$.
\end{theorem}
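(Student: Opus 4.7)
The plan is to prove $\hat{\sigma}_t^2 \to \sigma^2$ in probability and then conclude $\hat{\sigma}_t \to \sigma$ by the continuous mapping theorem. Because $\sigma_k = \sigma$ for every $k$ we have $w_k = 1/K$ and $\sum_k w_k = 1$, so
\[
\hat{\sigma}_t^2 \;=\; \sum_{k=1}^{K} w_k\, \frac{\Gi M^k(t)}{\theta^k(t)} \;-\; (\hat{\mu}_t)^2.
\]
I would separately establish $\hat{\mu}_t \to \mu$ in probability (whence $(\hat{\mu}_t)^2 \to \mu^2$) and, for each $k$, $\Gi M^k(t)/\theta^k(t) \to \Exp[(X_1^k)^2] = \sigma^2 + \mu^2$ in probability; combining these yields $\hat{\sigma}_t^2 \to (\sigma^2+\mu^2)-\mu^2 = \sigma^2$.

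The first convergence would follow by applying Theorem \ref{theo:32}(ii) to each sensor under the sampling scheme (\ref{times}): since $\Di \to \infty$ with $\Di = o(t)$, each $\hat{\mu}_t^k \to \mu$ in probability, and the finite linear combination $\hat{\mu}_t = \sum_k w_k \hat{\mu}_t^k$ inherits the convergence by independence across sensors and continuous mapping.

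For the second convergence, I plan to apply the one-dimensional argument of Theorem \ref{last} to each non-negative random walk $Z^k_t = (X_1^k)^2 + \ldots + (X_t^k)^2$, whose i.i.d.\ steps have mean $\sigma^2 + \mu^2$ and are sampled at the first-passage times $\theta_n^k$ with threshold $\Gi$. Since $\bigl((X_1^k)^2\bigr)^- = 0$, Theorem 8.1 in Gut (as cited in Lemma \ref{lem:330}(ii)) gives $\Exp[(\theta^k)^r] \sim \bigl(\Gi / \Exp[(X_1^k)^2]\bigr)^r$ for every $r \geq 1$; in particular $\Var[\theta^k] = \calo(\Exp[\theta^k]^2)$, which is precisely the hypothesis needed to apply Theorem \ref{ans3}(i) to $(Z^k_t)$. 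That theorem yields $Z^k_{\theta^k(t)}/\theta^k(t) \to \sigma^2 + \mu^2$ in probability, and the discrepancy between this sample mean and $\Gi M^k(t)/\theta^k(t)$ equals $1/\theta^k(t)$ times the accumulated overshoot $\sum_{n=1}^{M^k(t)} \eta^k_n$, where $\eta^k_n := Z^k_{\theta^k_n} - Z^k_{\theta^k_{n-1}} - \Gi \geq 0$. This discrepancy vanishes in probability by the analogue of (\ref{e}) in the proof of Theorem \ref{theo:32}(ii).

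The real subtlety, and the reason the third-moment hypothesis $\Exp[|X_1^k|^3]<\infty$ deserves comment, is that it only provides a $3/2$-moment of the step $(X_1^k)^2$ of $Z^k$; Theorem 10.5 in Gut therefore cannot be used directly to conclude $\Exp[\eta^k_n] = \calo(1)$, as that would require $\Exp[(X_1^k)^4] < \infty$. I would replace this with the weaker Wald-based estimate
\[
\Exp[\eta^k_1] \;=\; \Exp[(X_1^k)^2]\,\Exp[\theta^k] - \Gi \;=\; o(\Gi),
\]
which, together with $\Exp[M^k(t)] = \calo(t/\Exp[\theta^k])$ and $\Exp[\theta^k] = \Theta(\Gi)$, yields $\Exp\bigl[\sum_{n=1}^{M^k(t)} \eta^k_n\bigr]/t = o(1)$. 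This is the main technical hurdle; once it is in place, the rest of the argument follows the template of Theorem \ref{theo:32}(ii) verbatim.
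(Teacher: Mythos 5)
Your proof is correct and shares the paper's overall structure: reduce to the sensor-wise convergence $\Gi M^k(t)/\theta^k(t)\overset{p}{\rightarrow}\sigma^2+\mu^2$, then run the template of Theorem \ref{theo:32}(ii) on the nonnegative random walk $Z^k$, and combine with $\hat{\mu}_t\overset{p}{\rightarrow}\mu$. Where you depart from --- and genuinely improve on --- the paper is the overshoot control. The paper's proof is a one-line pointer to Theorem \ref{last}, whose proof in turn asserts this ``can be done in exactly the same way'' as in Theorem \ref{theo:32}(ii) ``under a third moment assumption.'' But the key display (\ref{e}) in that proof uses $\Exp[\eta]=\calo(1)$; applied to the walk $Z^k$ with step $(X^k_1)^2$, this would require $\Exp[(X^k_1)^4]<\infty$, which exceeds the stated hypothesis $\Exp[|X^k_1|^3]<\infty$. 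Your replacement --- deriving $\Exp[\eta^k_1]=\Exp[(X^k_1)^2]\,\Exp[\theta^k]-\Gi=o(\Gi)$ from Wald's identity plus the elementary renewal theorem for nonnegative steps, and then noting that together with $\Exp[M^k(t)]=\calo(t/\Gi)$ this still gives $\Exp\bigl[\sum_{n=1}^{M^k(t)}\eta^k_n\bigr]/t=o(1)$ --- closes that gap and needs only $\Exp[(X^k_1)^2]<\infty$ for the $Z^k$-part. (Your observation that $\Var[\theta^k]=\calo(\Exp[\theta^k]^2)$ follows from the $L^r$ renewal theorem for nonnegative steps, rather than from the analogue of (\ref{neon}), is the same kind of economizing.) So your argument is not only complete where the paper is terse; it establishes the claimed convergence under a strictly weaker moment requirement than a verbatim transcription of the cited proof would impose, and it makes precise a step the paper treats purely by analogy.
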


\begin{proof}
It suffices to show that   for every $1 \leq k \leq K$,
$$ \frac{\G^{k} M_{t}^{k}}{\theta^{k}(t)} \overset{p} \rightarrow \Exp[(X_{1}^{k})^{2}]= \sigma^{2}+\mu^{2}$$
as $\Gi,t \rightarrow \infty$ so that $\Gi=o(t)$, which can be done  as in Theorem \ref{last}.
\end{proof}

Finally, when the sign of $\mu$ is not known in advance, we need to replace the communication times (\ref{times}) with $\tau_{n}^{k}:= \inf\{t \geq \tau_{n-1}^{k}: |S^{k}_{t}- S^{k}_{\tau^{k}_{n-1}}| \geq \D^{k} \}$ and at each time $\tau_{n}^{k}$ sensor $k$  needs to transmit the 1-bit message  $z_{n}^{k}=1(S^{k}_{\tau_{n}^{k}}-S^{k}_{\tau_{n-1}} \geq  \Di)$. Then, the estimators (\ref{old2}) and (\ref{new2}) are generalized into  (\ref{old}) and (\ref{new}), respectively, and Theorem \ref{theo:3222}(i) remains valid, however, the extension of Theorem \ref{theo:3222}(ii) is not straightforward.

\section{Conclusions} \label{sec5}
In the present work, we extended some fundamental renewal-theoretic results for renewal processes whose rates go to 0 and for random walks whose drifts go to infinity.  We applied these extensions to a problem of parameter estimation subject to communication constraints, but we believe that they can be useful 
in a variety of setups where recurrent events occur infrequently. It remains an open problem to examine under what conditions, if any, other classical results from renewal theory remain valid in such a low-rate setup.


\addcontentsline{toc}{chapter}{Bibliography}

\end{document}